\newcommand{\reff}[1]{(\ref{#1})}
\theoremstyle{plain}
\newtheorem{theo}{Theorem}[section]
\newtheorem{prop}[theo]{Proposition}
\newtheorem{lem}[theo]{Lemma}
\theoremstyle{remark}
\newtheorem{rem}[theo]{Remark}
\newcommand{\ca}{{\mathcal A}}
\newcommand{\cc}{{\mathcal C}}
\newcommand{\ce}{{\mathcal E}}
\newcommand{\cg}{{\mathcal G}}
\newcommand{\ch}{{\mathcal H}}
\newcommand{\cs}{{\mathscr S}}
\newcommand{\cu}{{\mathcal U}}
\newcommand{\ff}{{f}}
\newcommand{\E}{{\mathbb E}}
\newcommand{\N}{{\mathbb N}}
\newcommand{\Nz}{{\mathbb N}}
\newcommand{\Np}{{\mathbb N}^*}
\newcommand{\Ni}{\bar \Nz}
\renewcommand{\P}{{\mathbb P}}
\newcommand{\T}{{\mathbb T}}
\newcommand{\Tf}{{\mathbb T}_{\rm f}}
\newcommand{\re}{{\rm e}}
\newcommand{\rs}{{\rm s}}
\newcommand{\bt}{{\mathbf t}}
\newcommand{\ind}{{\bf 1}}
\newcommand{\fc}{\mathfrak{c}}
\newcommand{\fm}{\mathfrak{m}}
\newcommand{\fp}{\mathfrak{p}}
\newcommand{\ske}{{\rm Ske}} 
\newcommand{\anc}{{\rm Anc}}
\newcommand{\inv}[1]{\mathop{\frac{1}{ #1}}\nolimits}
\newcommand{\expp}[1]{\mathop {\mathrm{e}^{ #1}}}
\title[Very fat GW trees]{Very fat geometric Galton-Watson trees}
\date{\today}
\author{Romain Abraham} 
\address{
Romain Abraham,
Laboratoire MAPMO, CNRS, UMR 7349,
F\'ed\'eration Denis Poisson, FR 2964,
 Université d'Orléans,
B.P. 6759,
45067 Orléans cedex 2,
France.
}
\email{romain.abraham@univ-orleans.fr} 
\author{Aymen Bouaziz} 
\address{
Aymen Bouaziz,
Institut Préparatoire aux \'Etudes Scientifiques et Techniques, 
La Marsa, 
2070-Tunis.
E-mail address: bouazizaymen18@yahoo.com
}
\email{bouazizaymen18@yahoo.com}
\author{Jean-François Delmas}
\address{
Jean-Fran\c cois Delmas,
Université Paris-Est, \'Ecole des Ponts, CERMICS, 6-8
av. Blaise Pascal, 
  Champs-sur-Marne, 77455 Marne La Vallée, France.}
\email{delmas@cermics.enpc.fr}
\begin{document}

\begin{abstract}
  Let $\tau_n$ be a random tree distributed as a Galton-Watson tree with
  geometric  offspring distribution  conditioned on  $\{Z_n=a_n\}$ where
  $Z_n$  is  the  size  of  the  $n$-th  generation  and  $(a_n, n\in \N^*)$  is  a
  deterministic  positive sequence.  We study  the  local  limit of  these  trees
  $\tau_n$  as  $n\to\infty$ and  observe  three  distinct regimes:  if
  $(a_n, n\in \N^*)$ grows slowly, the limit consists in an infinite
  spine  decorated   with  finite   trees  (which  corresponds   to  the
  size-biased tree for critical or subcritical offspring distributions),
  in  an  intermediate regime,  the  limiting  tree  is composed  of  an
  infinite skeleton (that does not satisfy the branching property) still
  decorated with  finite trees  and, if  the sequence  $(a_n, n\in \N^*)$ increases
  rapidly,  a  condensation  phenomenon  appears and  the  root  of  the
  limiting tree has an infinite number of offspring.
\end{abstract}

\maketitle

\section{Introduction}

A Galton-Watson (GW for short) process $(Z_n,n\ge 0)$ describes the size
of  an  evolving population  where,  at  each generation,  every  extant
individual reproduces  according to the same  offspring distribution $p$
independently of the rest of the population. The associated genealogical
tree $\tau$  is called a  GW tree. Let $\mu$  denote the mean  number of
offspring per  individual, that  is the  mean of $p$.   When $p$  is non
degenerate,  a classical  result  states that  if $\mu<1$  (sub-critical
case)  or   $\mu=1$  (critical   case),  then  the   population  becomes
a.s. extinct (i.e.   $Z_n=0$ for some $n\ge 0$ a.s.)  whereas if $\mu>1$
(super-critical case), the population has  a positive probability of non
extinction.

Another classical  result from  Kesten's work  \cite{k:sbrwrc} describes
the local  limit in distribution  of a  critical or subcritical  GW tree
conditioned on $\{Z_n>0\}$  as $n\rightarrow\infty $, which  can be seen
as a  critical or  sub-critical GW  tree conditioned  on non-extinction.
The limiting tree  is the so-called sized-biased tree  or Kesten tree,
and it can also be viewed as a two-type GW tree.

There  are  other  ways  of   conditioning  the  tree  of  being  large:
conditioning on having a large total  population size, or a large number
of leaves... In  the critical case, all these conditionings  lead to the
same local limit, see \cite{ad:llcgwtisc} and the references therein. In
the sub-critical case, a condensation  phenomenon (i.e. a vertex with an
infinite   number  of   offspring  at   the  limit)   may  happen,   see
\cite{ad:llcgwtcc} or \cite{ad:hapi} and the references therein, but even
there, there can be only two  different limiting trees, a size-biased GW
tree or a condensation tree.
\medskip

In order to have  different limits, an idea is to  condition the tree to
be even bigger, i.e. to consider conditionings of the form $\{Z_n=a_n\}$
for  some positive  deterministic sequence  $(a_n, n\in  \N^*)$ possibly
converging to infinity.  Some results on branching processes conditioned
on their  limit behaviour  already appeared in  previous works,  see for
instance \cite{o:mbbp}  where the distributions of  the conditioned Yule
process (which corresponds  to a super-critical branching  process) or a
critical binary  branching are described via  an infinitesimal generator
and a martingale  problem. The first study of local  limits for GW trees
with  such a  conditioning appears  in \cite{ad:llcgwtisc}  where it  is
proven that,  if $p$  is a critical  offspring distribution  with finite
variance,  then  the  tree  conditioned on  $\{Z_n=a_n\}$  converges  in
distribution  to  the  associated  sized-biased  tree  if  and  only  if
$\lim_{n\rightarrow\infty } a_nn^{-2}=0$.\medskip

The goal  of this paper is  to study what happens  beyond that condition
and to  consider the  sub-critical and super-critical  cases. We  give a
complete description of all the cases when the offspring distribution is
a  geometric distribution  with a  Dirac mass  at 0  (in that  case, the
distribution of $Z_n$ is explicit). We
observe   three  regimes   according   to  the   speed   of  growth   of
$(a_n, n\in \N^*)$. We set:
\[
c_n=\begin{cases}
\mu^{-n} & \mbox{if }\mu<1 \text{ (sub-critical case)},\\
n^2 & \mbox{if }\mu=1 \text{ (critical case)},\\
\mu^n & \mbox{if }\mu>1 \text{ (super-critical case)},
\end{cases}
\]
and we shall consider that:
\[
\lim_{n\to\infty}\frac{a_n}{c_n}=\theta\in[0,+\infty].
\]
Let $\tau^{0,0}$ denote the GW tree $\tau$ conditioned on the extinction
event $\ce=\bigcup _{n\in \N^*}\{Z_n=0\}$. Notice that $\tau^{0,0}$ is
distributed as $\tau$ in the sub-critical and critical cases.

\begin{itemize}
   \item In the \textbf{Kesten regime} ($\theta=0$), the limiting tree, $\tau^0$, is the  Kesten
     tree, which is a two-type GW tree, with an infinite spine
     corresponding to the individuals having an infinite progeny (called
     the survivor type), on which are grafted
     independent GW trees distributed as $\tau^{0,0}$ corresponding to
     individuals having a finite progeny (called extinction type). 
   \item In the \textbf{Poisson regime} ($\theta\in (0, +\infty )$), the limiting
     tree, $\tau^\theta$,  is no more  a GW tree,  but it still  has two
     types, with a backbone  without leaves corresponding to individuals
     having  an infinite  progeny (also  called the  survivor type),  on
     which   are   grafted   independent   GW   trees   distributed   as
     $\tau^{0,0}$. However, the  backbone can not be seen as  a GW tree,
     as it lacks the branching property. This is more like a random tree
     with  a  Poissonian  immigration  at  each  generation  with  rates
     depending on  $\theta$ and with  all the configurations  having the
     same probability.
   \item In  the \textbf{condensation  regime} ($\theta=+\infty $),  the limiting
     tree $\tau^\infty  $ is again a  two-type GW tree, with  a backbone
     without  leaves corresponding  to  individuals  having an  infinite
     progeny  (also called  the  survivor type),  on  which are  grafted
     independent GW trees distributed  as $\tau^{0,0}$. The backbone can
     be  seen as  an  inhomogeneous  GW tree  with  the  root having  an
     infinite   number   of    children   (condensation   regime),   and
     super-critical offspring  distribution at  level $h>0$  with finite
     mean $\mu_h$ which decreases to 1 as $h$ goes to infinity.
\end{itemize}

We also prove that the family $(\tau^\theta, \theta\in [0, +\infty ])$
is continuous in distribution (the most interesting case are the
continuity at $0$ and $+\infty $), see Remark
\ref{rem:cvtq-t0} and Proposition \ref{prop:cvtq-tinfty}. 

\begin{rem}
   The main ingredient of the proofs is Equation \reff{eq:ph} and hence is the limit of the ratio
   \[
   \lim_{n\to+\infty}\frac{\P_k(Z_{n-h}=a_n)}{\P(Z_n=a_n)}
   \]
   which is closely related to the extremal space-time harmonic functions associated with the GW process, see \cite{o:mbbp}. This limit is computed in the Kesten regime at the end of the proof of Proposition \ref{prop:cv-nsfat}, and at the end of the proof of Proposition \ref{prop:cv-fat} in the Poisson regime. In the condensation regime, this limit is 0. Notice that in this regime, the conditioned Galton-Watson process converges to a trivial process which is always equal to $+\infty$ (except at $n=0$) but considering the genealogical tree gives a non-trivial limit.
\end{rem}

Partial results  in a more  general setting for super-critical  and some
sub-critical  cases  are  given   in  \cite{ad:apegwt}:  convergence  of
$\tau_n$  in  the  Kesten  and  the  intermediate  regimes  for  general
offspring  distributions, and  in the  high  regime in  the Harris  case
(offspring  distribution  with  bounded   support),  the  continuity  in
distribution  of the  family of  limiting trees  at $\theta=0$  and some
partial results  at $\theta=+\infty$. Some  similar results can  also be
derived for  sub-critical offspring distributions under  strong  additional
assumptions.\medskip

The   rest   of   the   paper   is   organized   as   follows:   Section
\ref{sec:notations} introduces the framework  of discrete trees with the
notion of  local convergence for  sequences of  trees, the GW  trees and
some properties of the  geometric distribution. Section \ref{sec:gwtree}
describes the  GW tree with  geometric offspring distribution  with some
technical  lemmas that  are used  in the  proofs of  the main  theorems.
Section \ref{sec:nsf} studies  the Kesten regime, where  the Kesten tree
$\tau^0$ is defined  and the convergence in distribution  of $\tau_n$ to
$\tau^0$  is  stated   (Proposition  \ref{prop:cv-nsfat}).   In  Section
\ref{sec:f},        the       family        of       random        trees
$(\tau^\theta, \theta\in (0, +\infty ))$ is introduced and a convergence
result    is   obtained    for   the    Poisson   regime    (Proposition
\ref{prop:cv-fat})  as  well  as   the  continuity  in  distribution  of
$(\tau^\theta,  \theta\in   (0,  +\infty   ))$  at   $\theta=0$  (Remark
\ref{rem:cvtq-t0}).   Finally,   Section  \ref{sec:vf}   introduces  the
condensation tree  $\tau^\infty$, proves the convergence  of $\tau_n$ to
$\tau^\infty$     in     the    condensation     regime     (Proposition
\ref{prop:cv-vfat})    and   the    continuity   in    distribution   of
$(\tau^\theta,   \theta\in   (0,   +\infty   ))$   at   $\theta=+\infty$
(Proposition \ref{prop:cvtq-tinfty}).

\section{Notations}\label{sec:notations}

We denote by $\Nz=\{0,1,2,\ldots\}$ the set of non-negative integers, by
$\Np=\{1,2,\ldots\}$    the    set     of    positive    integers    and
$\Ni=\Nz\cup\{+\infty\}$.   For  any  finite   set  $E$,  we  denote  by
$\sharp E$ its cardinal.

\subsection{The set of discrete trees}

We recall Neveu's formalism \cite{n:apghw} for ordered rooted trees. Let
$\cu=\bigcup_{n\geq 0}(\Np)^{n}$ be the  set of finite sequences
of positive integers with the convention $(\Np)^{0}=\{\emptyset\}$. We
also 
set $\cu^*=\bigcup_{n\geq 1}(\Np)^{n}= \cu\backslash\{\emptyset\}$. 

For $u\in \cu$, let $|u|$  be the  length or the generation  of $u$
defined as the  integer $n$ such that $u \in(\Np)^{n}$.   If $u$ and $v$
are two sequences of $\cu$,  we denote by $uv$ the concatenation
of two sequences,  with the convention that $uv=vu=u$  if $v=\emptyset$.

The set of strict ancestors of $u\in  \cu^*$ is
defined by:
\[
\anc(u)=\{v \in \mathcal{U},\ \exists  w \in \mathcal{U}^*,\  u=vw\}, 
\]
and    for    $\cs\subset    \cu^*$,    being    non-empty,    we    set
$\anc(\cs)=\bigcup _{u\in \cs} \anc(u)$.

A tree $\bt$ is a subset of $\mathcal{U}$ that satisfies :
\begin{itemize}
\item  $\emptyset \in \bt$.
\item  If $u\in \bt$, then $\anc(u)\subset \bt$.
\item  For every $u\in \bt$, there exists $k_{u}(\bt)\in\bar\Nz$ such that, for every positive integer $i$, $ui \in \bt \iff 1\leq i\leq k_{u}(\bt)$.
\end{itemize}

We denote by $\T_\infty $ the set of trees.  Let $\bt \in \T_\infty $ be
a tree. The vertex $\emptyset$ is called  the root of the tree $\bt$ and
we  denote by  $\bt^*=\bt\backslash\{\emptyset\}$ the  tree without  its
root.  For a  vertex $u\in\bt$, the integer  $k_{u}(\bt)$ represents the
number  of  offspring  (also  called   the  out-degree)  of  the  vertex
$u  \in   \bt$.  By   convention,  we   shall  write   $k_u(\bt)=-1$  if
$u\not\in \bt$.  The height $H(\bt)$ of the tree $\bt$ is defined by:
\[
 H(\bt) =  \sup \{|u|,\ u \in \bt\} \in \Ni.
\]
For $n\in\Nz$, the size of the $n$-th generation of $\bt$ is defined by:
\[
z_{n}(\bt)=\sharp\{u \in \bt ,\vert u\vert=n\}.
\]

We denote by $\Tf^*$ the subset of trees with finite out-degrees except the root's:
\[
\Tf^{*} = \{\bt \in \T_\infty ;\,  \forall u\in\bt^*,\ k_u(\bt) < + \infty
\}
\]
and by $\Tf= \{\bt \in \Tf^*;\,   k_\emptyset(\bt) < + \infty \}$ the
subset of trees with finite out-degrees. 

Let   $h,k\in\Np$. We define $\T^{(h)}$ the subset of finite trees with
height $h$: 
\[
\Tf^{(h)}=\{\bt \in \Tf;\,  H(\bt)= h \}
\]
and $\T^{(h)}_{k}= \{\bt \in \Tf^{(h)};\, k_\emptyset(\bt)= k\}$ the subset of finite trees with height equal to  $h$
and out-degree of the root equal to  $k$. 
We also define the restriction operators $r_h$ and $r_{h,k}$, for
every $\bt\in\T_\infty $, by:
\[
r_h(\bt)  =\{ u\in\bt;\ |u|\le h\}
\quad\text{and}\quad 
r_{h,k}(\bt)  =\{\emptyset\} \cup  \{ u\in r_h(\bt) ^*; \, u_1\le k\},
\]
where $u_1$ represents the first term of the sequence $u$ if $u\ne\emptyset$. In other words, $r_h(\bt)$ represents the tree $\bt$ truncated at height $h$ and $r_{h,k}(\bt)$ represents the subtree of $r_h(\bt)$ where only the $k$-first offspring of the root are kept.
Remark that, for  $\bt\in\Tf$,
if $H(\bt)\geq h$ then $r_h(\bt)\in \Tf^{(h)}$ and if furthermore 
$k_\emptyset(\bt)\geq k$ then $r_{h,k}(\bt)\in\T^{(h)}_{k}$.

\subsection{Convergence of trees}

Set $  \N_{1}=\{-1\}\cup \Ni $, endowed  with the usual topology  of the
one-point compactification of the  discrete space $\{-1\}\cup \Nz$.  For
a  tree  $\bt\in\T_\infty$,  recall that  by  convention  the  out-degree
$k_u(\bt)$ of $u$ is  set to -1 if $u$ does not belong  to $\bt$. Thus a
tree  $\bt\in\T_\infty   $  is  uniquely  determined   by  the  sequence
$(k_u(\bt),   u\in\cu)$  and   then  $\T_\infty   $  is   a  subset   of
$\N_{1}^{\cu}$.  By Tychonoff  theorem, the  set $\N_{1}^{\cu}$  endowed
with the product topology is compact. Since $\T_\infty $ is closed it is
thus compact. In fact, the set $\T_\infty$ is a Polish space (but we don't need
any precise metric at this point). The convergence of sequences of trees
is then  characterized as follows. Let  $(\bt_n, n\in \N)$ and  $\bt$ be
trees in $\T_\infty $. We say that $\lim_{n\rightarrow\infty } \bt_n=\bt$
if and only if  $\lim_{n\rightarrow\infty }
k_u(\bt_n)=k_u(\bt)$ for all $u\in \cu$. 
It is easy to see that:
\begin{itemize}
   \item  If  $(\bt_n, n\in \N)$ and $\bt$ are trees  in
$\Tf$, then we have $\lim_{n\rightarrow\infty } \bt_n=\bt$ if and only if 
$\lim_{n\rightarrow\infty } r_h(\bt_n)=r_h(\bt)$ for all $h\in
  \N^*$.
\item  If  $(\bt_n, n\in \N)$ and $\bt$ are trees  in
$\Tf^*$, then we have $\lim_{n\rightarrow\infty } \bt_n=\bt$ if and only if 
$\lim_{n\rightarrow\infty } r_{h,k}(\bt_n)=r_{h,k}(\bt)$ for all $h,k\in
  \N^*$.
 \end{itemize}

 Let  $T$ be  a  $\Tf$-valued (resp.   $\Tf^*$-valued) random  variable.
 It is easy to get  that if a.s.  $H(T)=+\infty  $ (resp.  a.s.  $H(T)=+\infty $ and
 $k_\emptyset(T)=+\infty   $),  then   the   distribution   of  $T$   is
 characterized                                                        by
 $\left(\P(r_h(T)=\bt);  \,  h\in  \N^*,  \,  \bt  \in  \Tf^{(h)}\right)$
 (resp.
 $\left(\P(r_{h,k}(T)=\bt);    \,    h,k\in     \N^*,    \,    \bt    \in
   \T_{k}^{(h)}\right)$).
 Using the Portmanteau theorem, we deduce the following results:
\begin{itemize}
\item Let  $(T_n, n\in  \N)$ and $T$  be $\Tf$-valued  random variables.
  Then  we have  the following  characterization of  the convergence  in
  distribution if a.s. $H(T)=+\infty $:
\begin{equation}
   \label{eq:cv-loi}
T_n\; \xrightarrow[n\rightarrow \infty ]{\textbf{(d)}}\; T
\iff 
\lim_{n\rightarrow\infty } \P(r_h(T_n)=\bt)=\P(r_h(T)=\bt) 
\quad \text{for all $h\in \N^*, \, \bt\in \Tf^{(h)}$}.
\end{equation}
   \item  Let  $(T_n, n\in \N)$ and $T$ be $\Tf^*$-valued random
     variables.  Then we have the
     following characterization of the convergence in distribution if
     a.s. $H(T)=+\infty $,  $k_\emptyset(T)=+\infty $: 
\begin{equation}
   \label{eq:cv-loi*}
T_n\; \xrightarrow[n\rightarrow \infty ]{\textbf{(d)}}\; T
\iff 
\lim_{n\rightarrow\infty } \P(r_{h,k}(T_n)=\bt)=\P(r_{h,k}(T)=\bt) 
\quad \text{for all $h,k\in \N^*, \, \bt\in \T_{k}^{(h)}$}.
\end{equation}
\end{itemize}

\subsection{GW trees}
Let $p=(p(n), n\in \Nz)$ be a probability distribution on $\Nz$. A
$\Tf$-valued random variable $\tau$ is called a GW tree with
offspring distribution $p$ if for all $h\in \Np$ and $\bt\in \Tf$ with
$H(\bt)\leq h$:
\[
\P(r_h(\tau)=\bt)=\prod_{u\in r_{h-1}(\bt)} p(k_u(\bt)).
\]
The generation size process defined by  $(Z_n=z_n(\tau), \, n\in \N)$ is the
so called  GW process. We refer  to \cite{an:bp} for a  general study of
 GW processes. We set $\P_k$ the probability under which the GW process
 $(Z_n, n\in \N)$ starts with $Z_0=k$ individuals and write $\P$ for $\P_1$ so that:
 \[
 \P_k(Z_n=a)=\P(Z_n^{(1)}+\cdots+Z_n^{(k)}=a),
 \]
 where the $(Z^{(i)},1\le i\le k)$ are independent copies  of $Z$ under $\P$.
 
We consider a sequence $(a_n, n\in \Np)$ of elements in
 $ \Np$ and, when $\P(Z_n=a_n)>0$, 
$\tau_n$    a  random  tree   distributed  as  the  GW   tree  $\tau$
conditionally on $\{Z_n= a_n\}$.
Let $n\geq h\geq 1$ and $\bt\in \Tf^{(h)}$. We have by the branching property of GW-trees at height $h$, setting $k=z_h(\bt)$:
\begin{equation}
   \label{eq:ph}
  \P(r_{h}(\tau_n)=\bt)
= \P(r_h(\tau)=\bt)\frac{\P_k(Z_{n-h}=a_n)}{\P(Z_n=a_n)} \cdot
\end{equation}
\medskip

\subsection{Geometric distribution}
\label{sec:geom}
Let $\eta\in (0,1]$ and $q\in (0,1)$. We define the geometric
$\cg(\eta,q)$ distribution $p=(p(k), k\in \Nz)$ by
\begin{equation}\label{eq:geo}
\begin{cases}
p(0)=1-\eta,\\
p(k)=\eta q(1-q)^{k-1} & \mbox{for }k\in \Np.
\end{cases}
\end{equation}

We shall always consider that $\tau$ is a GW tree with geometric offspring distribution
$\cg(\eta,q)$. 
\medskip 

The mean of  $\cg(\eta, q)$ is given by $\mu=\eta/q$  and its generating
function $\ff$ is given by:
\[
\ff(s)=\frac{(1-\eta) - s(1-q-\eta)}{1 - s(1-q)}, \quad s\in [0,1/(1-q)).
\]
We set:
\begin{equation}
   \label{eq:g+c0}
\gamma=\inv{1-q} 
\quad\text{and}\quad 
\kappa=\frac{1-\eta}{1-q}
\end{equation}
where $\gamma$ is the radius of convergence of $\ff$ and $\kappa$ and 1
are the only fixed points of $\ff$ on $[0, \gamma)$.  If $\mu=1$ then
there is  only one fixed point  as $\kappa=1$. We shall  use frequently the
following relations:
\begin{equation}
   \label{eq:g-c,g-1}
\gamma-\kappa= \mu (\gamma -1)
\quad\text{and,  if $\mu\neq 1$,}\quad
\gamma-1= 
\frac{\kappa-1}{1-\mu} \cdot
\end{equation}
Notice that $\kappa\in [0, +\infty )$ and
$\gamma\in (1, +\infty )$ allow
to recover $\eta$ and $q$ as:
\begin{equation}
   \label{eq:h+q}
\eta=1- \frac{\kappa}{\gamma} 
\quad\text{and}\quad
q= 1-\inv{\gamma}\cdot
\end{equation}
For this reason, we shall also write 
$\cg[\kappa, \gamma]$ for 
$\cg(\eta,q)$. Notice that if $\mu<1$, then $q>\eta$ and $\gamma> \kappa>1$; and if
$\mu>1$, then $\eta>q$ and $\gamma>1>\kappa\geq 0$.

Since $\ff$ is an homography, we get 
for $s\in [0, \gamma)\backslash \{1\}$:
\begin{equation}
   \label{eq:homo}
\frac{ \ff(s) - \kappa}{\ff(s) - 1}=\inv{\mu} \frac{ s - \kappa}{s -
  1}\cdot
\end{equation}
We    set    $\ff_1=\ff$   and,    for    $n\in    \Np$,
$\ff_{n+1}=\ff\circ\ff_n$. Notice that $\kappa$ is a fixed
point of $\ff_n$ as it is a fixed point of $\ff$.
We deduce from \reff{eq:homo} and
the second equality of \reff{eq:g-c,g-1} if $\mu\neq  1$ and by direct recurrence  if $\mu=1$, that
$\ff_n$, for $n\in \Np$,   is the  generating function  of the  geometric distribution
$\cg[\kappa, \gamma_n]=\cg(\eta_n, q_n)$ with mean $\mu_n=\mu^n$ and,
thanks to \reff{eq:h+q}:
\begin{equation}
   \label{eq:hqg-n}
\eta_n=1-\frac{\kappa}{\gamma_n},
\,\,\,
q_n=1-\inv{\gamma_n}
\quad\text{with }
\gamma_n= 
\begin{cases}
\displaystyle \frac{\kappa -\mu^n}{1 -\mu^n} = 1+ (\gamma-1) \frac{q^{n-1} (q-\eta)
  }{q^n-\eta^n}  & \text{ if $\mu\neq 1$},\\ 
    1+ (\gamma-1)\inv{n} & \text{ if $\mu=1$}.
\end{cases}
\end{equation}

By convention, we set $\ff_0$ the identity function defined on $[0,
+\infty )$ and $\gamma_0=+\infty $ so that for all $n\in \Nz$, we have
$\gamma_n=\lim_{r\rightarrow+\infty  } \ff_n^{-1}(r)$ that is in
short $\gamma_n=\ff_n^{-1}(\infty )$. We deduce that for all $n\geq
\ell \geq 0$:
\begin{equation}
   \label{eq:g-g}
\ff_\ell(\gamma_n)=\gamma_{n-\ell}.
\end{equation}

We derive some asymptotics for $\gamma_n$ for large $n$. 
It is easy to deduce from \reff{eq:hqg-n} that:
\begin{equation}
   \label{eq:lim-g}
\lim_{n\rightarrow\infty } \gamma_n=\max(1, \kappa)=
\begin{cases}
\kappa & \text{ if $\mu\leq 1$},\\
1 & \text{ if $\mu\geq 1$}.\\
\end{cases}
\end{equation}
Using \reff{eq:g-c,g-1}, we get for large 
$n$:
\begin{equation}
   \label{eq:equiv-gg-1c0}
(\gamma_n -\kappa)(\gamma_n -1)=
\begin{cases}
\mu^n {(\kappa -1)^2}+O(\mu^{2n}) & \text{ if $\mu<1$},\\
   (\gamma-1)^2 n^{-2}  & \text{ if $\mu=1$},\\
\mu^{-n} (\kappa-1)^2 + O(\mu^{-2n}) & \text{ if $\mu>1$}.\\
\end{cases}
\end{equation}
We derive from \reff{eq:hqg-n} the logarithm asymptotics of
$\gamma_n/\gamma_{n-h}$ for given $h\in \Np$ and large $n$: 
\begin{equation}
   \label{eq:equiv-gg}
\log(\gamma_{n-h}/\gamma_{n} )=\log(\gamma_{n-h})- \log(\gamma_{n} )=
\begin{cases}
\mu^{n-h} \left(1-\mu^h\right) (\kappa -1)/\kappa + O(\mu^{2n}) & \text{ if $\mu<1$},\\
   (\gamma-1) h n^{-2} +O (n^{-3}) & \text{ if $\mu=1$},\\
\mu^{-n} \left(\mu^h-1\right) (1-\kappa)  + O(\mu^{-2n}) & \text{ if $\mu>1$}.\\
\end{cases}
\end{equation}

We recall the following well-known equality which holds for all $k\in
\Np$ and $r\in (0, 1)$:
\begin{equation}
   \label{eq:serie}
\sum_{\ell\geq  k} \binom{\ell -1} {k-1} r ^\ell=
\left(\frac{r}{1-r}\right)^k.
\end{equation}
And we end this section with an elementary lemma. 
\begin{lem}
   \label{lem:hyper}
Let $(X_\ell, \ell\in \Np)$ be independent random variables with distribution
$\cg(\eta,q)=\cg[\kappa, \gamma]$. For $a\geq k\geq 1$:
\[ 
\P\left(\sum_{\ell=1}^k X_\ell=a\right)
=\sum_{i=1}^{k} \binom{k}{i}  \, \binom{a-1}{i-1}\, \kappa^{k-i}
(\gamma-\kappa)^i (\gamma-1)^i \gamma^{-a-k}.
\]
\end{lem}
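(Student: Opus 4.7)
The plan is to prove this by conditioning on the number of variables $X_\ell$ that take positive values. The underlying observation is that the geometric distribution $\cg(\eta,q)$ is a mixture: it puts mass $1-\eta$ on $0$ and, conditionally on being positive, it is the standard $\{1,2,\ldots\}$-valued geometric with success probability $q$.

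First, I would rewrite the atoms of $\cg(\eta,q)$ in terms of $\kappa$ and $\gamma$ using \reff{eq:h+q}: $p(0) = 1-\eta = \kappa/\gamma$, and for $k\geq 1$,
\[
p(k) = \eta q (1-q)^{k-1} = \frac{(\gamma-\kappa)(\gamma-1)}{\gamma^{k+1}}.
\]
Thus the conditional law of $X_\ell$ given $X_\ell \geq 1$ is the standard geometric with parameter $q$ on $\Np$.

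Next I would decompose the event $\{\sum_\ell X_\ell = a\}$ according to the cardinality $i$ of the (random) set $I = \{\ell : X_\ell \geq 1\}$. Since $a \geq 1$, only $i \in \{1,\ldots,k\}$ contributes. There are $\binom{k}{i}$ choices for $I$, each contributing a factor $(1-\eta)^{k-i}\,\eta^i$ for the pattern of zeros and positives. Conditionally on $|I|=i$, the sum $\sum_{\ell\in I} X_\ell$ is the sum of $i$ independent copies of a geometric $\cg(1,q)$ on $\Np$, which follows a negative binomial distribution: for any $a \geq i$,
\[
\P\Bigl(\sum_{j=1}^{i} Y_j = a\Bigr) = \binom{a-1}{i-1} q^i (1-q)^{a-i},
\]
where the identity $\sum_{a \geq i} \binom{a-1}{i-1} r^{a-i} = (1-r)^{-i}$ (a rewriting of \reff{eq:serie}) confirms this. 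Putting things together gives
\[
\P\Bigl(\sum_{\ell=1}^{k} X_\ell = a\Bigr)
= \sum_{i=1}^{k} \binom{k}{i}\,(1-\eta)^{k-i}\,\eta^i\,\binom{a-1}{i-1}\,q^i (1-q)^{a-i}.
\]

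Finally, I would substitute the values $1-\eta = \kappa/\gamma$, $\eta = (\gamma-\kappa)/\gamma$, $q = (\gamma-1)/\gamma$ and $1-q = 1/\gamma$ and collect powers of $\gamma$. The total exponent of $\gamma$ in the denominator is $(k-i) + i + i + (a-i) = a+k$, which is independent of $i$, yielding exactly the announced formula. There is no genuine obstacle here; the only point to watch is the bookkeeping of the $\gamma$-exponents, and noting that the $i=0$ term vanishes because it would require the empty sum to equal $a \geq 1$.
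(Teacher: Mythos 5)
Your proof is correct and follows essentially the same route as the paper: both decompose according to the set of indices $\ell$ with $X_\ell\geq 1$, obtaining the term $\binom{k}{i}(1-\eta)^{k-i}\binom{a-1}{i-1}(\eta q)^i(1-q)^{a-i}$ via the negative binomial count of compositions, and then substitute $\eta$ and $q$ in terms of $\kappa$ and $\gamma$ using \reff{eq:h+q}. The exponent bookkeeping $(k-i)+i+i+(a-i)=a+k$ is exactly right, so nothing is missing.
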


\begin{proof}
We have:
\begin{align}
\label{eq:Pk=a}
 \P\left(\sum_{\ell=1}^k X_\ell=a\right)  
&=\sum_{i=1}^{k} \binom{k}{i} \P(X_1=0)^{k-i} \, \P\left(\sum_{\ell=1}^i X_\ell=a,
  \, X_\ell\geq 1 \text{ for $\ell\in \{1, \ldots, i\}$}\right)\\
\nonumber
&=\sum_{i=1}^{k} \binom{k}{i}  \, (1-\eta)^{k-i}\, \binom{a-1}{i-1}\, (\eta q)^i (1-q)^{a-i}.
\end{align} 
Then use \reff{eq:h+q} to conclude.   
\end{proof}

\section{The geometric GW tree}\label{sec:gwtree}
\label{sec:geom-tree} 
Let $\tau$  be a GW  tree with geometric $\cg(\eta,q)$  offspring distribution
$p$ given by \reff{eq:geo}, with $\eta\in  (0, 1]$ and $q\in (0,
1)$. Recall that   $(Z_n,n\in\Nz)$ is the associated GW  process.

For $k\in \Np$, we denote by $\P_k$ the distribution of the geometric GW
forest composed  of $k$  independent GW  trees with  geometric offspring
distribution   $\cg(\eta,q)$,  and   write   $\P$   for  $\P_{1}$.   For
convenience,      we      shall      under      $\P$      denote      by
$Z^{(k)}=(Z_n^{(k)},   n\in   \Nz)$   a  GW   process   distributed   as
$Z=(Z_n, n\in \Nz)$ under $\P_k$.
For $n\in \Np$, we
set:
\begin{equation}
   \label{eq:def-m}
M_n=\gamma_1^{-Z_1} \, \gamma_n^{Z_n}.
\end{equation}
Since $Z_n$  has generating function  $\ff_n$ under $\P$,  we deduce
from \reff{eq:g-g} that $(M_n, n\in \Np)$ is a martingale with $M_1=1$.

 For $n\geq h\geq 1$, we set:
\begin{equation}
   \label{eq:def-bnh}
b_{n,h}=\left(\frac{\gamma_n}{\gamma_{n-h}}\right)^{a_n}.
\end{equation}

We shall use the following formula when $\lim_{n\rightarrow\infty }
b_{n,h}$ exists and belongs to $(0, \infty )$. 
\begin{lem}
   \label{lem:calculG}
Let $n\geq h\geq 1$ and $k\in \Np$. We have:
\begin{equation}
   \label{eq:pnh/ph}
\frac{\P_k(Z_{n-h}=a_n)}{\P(Z_n=a_n)}=b_{n,h}\, \sum_{i=1}^k \binom{k}{i} \,
\kappa^{k-i}\,  G_{n,h}(k,i), 
\end{equation}
with
\begin{equation}
   \label{eq:Gnh}
G_{n,h}(k,i)=\binom{a_n-1}{i-1}\, 
\frac{\gamma_n}{\gamma^k_{n-h}}\, \frac{(\gamma_{n-h} -\kappa)^i(\gamma_{n-h}
    -1)^i}{(\gamma_n -\kappa)(\gamma_n -1)}
\cdot
\end{equation}
\end{lem}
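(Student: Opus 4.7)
The plan is to apply Lemma \ref{lem:hyper} to both the numerator and the denominator and then match the algebraic form of the ratio to the claimed expression. The key point enabling this is that the generating function $\ff_n$ of $Z_n$ under $\P$ is itself the generating function of a geometric distribution $\cg[\kappa,\gamma_n]$, as recalled in the paragraph preceding \reff{eq:hqg-n}. Consequently, under $\P$, $Z_n$ is $\cg[\kappa,\gamma_n]$-distributed, and under $\P_k$, $Z_{n-h}$ is the sum of $k$ independent $\cg[\kappa,\gamma_{n-h}]$-distributed variables (one per starting individual, by the branching property).

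First I would apply Lemma \ref{lem:hyper} with $k=1$ and parameters $\cg[\kappa,\gamma_n]$ to compute the denominator: only the index $i=1$ survives the sum, giving
\[
\P(Z_n=a_n)=(\gamma_n-\kappa)(\gamma_n-1)\,\gamma_n^{-a_n-1}.
\]
Next I would apply Lemma \ref{lem:hyper} with the parameter $\gamma$ replaced by $\gamma_{n-h}$ (keeping $\kappa$ unchanged) and with the given $k$, obtaining
\[
\P_k(Z_{n-h}=a_n)
=\sum_{i=1}^{k}\binom{k}{i}\binom{a_n-1}{i-1}\kappa^{k-i}(\gamma_{n-h}-\kappa)^i(\gamma_{n-h}-1)^i\,\gamma_{n-h}^{-a_n-k}.
\]

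Then it only remains to divide the two expressions and reorganize the factors. The $\gamma$-powers combine as
\[
\frac{\gamma_n^{a_n+1}}{\gamma_{n-h}^{a_n+k}}=\left(\frac{\gamma_n}{\gamma_{n-h}}\right)^{a_n}\cdot\frac{\gamma_n}{\gamma_{n-h}^{k}},
\]
which immediately produces the factor $b_{n,h}$ defined in \reff{eq:def-bnh} and the factor $\gamma_n/\gamma_{n-h}^{k}$ appearing in $G_{n,h}(k,i)$. Grouping the $\binom{a_n-1}{i-1}$, the $(\gamma_{n-h}-\kappa)^i(\gamma_{n-h}-1)^i$ and the $(\gamma_n-\kappa)(\gamma_n-1)$ factors recovers exactly $G_{n,h}(k,i)$ from \reff{eq:Gnh}, yielding \reff{eq:pnh/ph}.

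There is no substantial obstacle; the only subtlety is bookkeeping: one must be careful that the parameter $\kappa$ is unchanged between levels (as the fixed point of $\ff$, it is also a fixed point of every $\ff_n$), while $\gamma$ must be replaced by $\gamma_n$ in the denominator and by $\gamma_{n-h}$ in the numerator. Keeping track of the exponents $a_n+1$ versus $a_n+k$ in the $\gamma$-terms is the only place where a miscount could creep in.
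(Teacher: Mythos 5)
Your proposal is correct and follows essentially the same route as the paper: compute the denominator from the fact that $Z_n\sim\cg[\kappa,\gamma_n]$ (which, as you note, is just Lemma \ref{lem:hyper} with $k=1$), apply Lemma \ref{lem:hyper} with parameters $\cg[\kappa,\gamma_{n-h}]$ to the numerator, and regroup the powers of $\gamma_n$ and $\gamma_{n-h}$ to isolate $b_{n,h}$ and $G_{n,h}(k,i)$. The bookkeeping you describe matches the paper's computation exactly.
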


\begin{proof}
   Let  $n\geq h\geq 1$. Since $Z_n$ has distribution $\cg[\kappa,
   \gamma_n]$, we obtain thanks to \reff{eq:g+c0}:
\[
\P(Z_n=a_n)=\eta_n q_n (1-q_n)^{a_n-1}=(\gamma_n -\kappa) (\gamma_n -1)
\gamma_n^{-a_n -1}.
\]
Using that $Z_{n-h}$ is under $\P_k$ distributed as the sum of $k$
independent random variables with distribution $\cg[\kappa, \gamma_{n-h}]$,
we deduce from Lemma \ref{lem:hyper} that:
\begin{align*}
\frac{\P_k(Z_{n-h}=a_n)}{\P(Z_n=a_n)}
&=\sum_{i=1}^{k} \binom{k}{i}  \, \binom{a_n-1}{i-1}\, \kappa^{k-i}
\,  
\frac{(\gamma_{n-h} -\kappa)^i(\gamma_{n-h}
    -1)^i}{\gamma^{a_n+k}_{n-h}}\, 
\frac{\gamma_n^{a_n+1}}{(\gamma_n -\kappa)(\gamma_n -1)}\\
&=b_{n,h}\, \sum_{i=1}^{k} \binom{k}{i}  \,  \kappa^{k-i}
 G_{n,h}(k,i).
\end{align*}
This gives the result. 
\end{proof}

We shall use the following formula when $\lim_{n\rightarrow\infty }
b_{n,h}=0$ and $\lim_{n\rightarrow\infty }a_n=+\infty $. 

\begin{lem}
   \label{lem:calculR}
Let $n> h\geq 1$, $k_0\in \Np$ and $\bt\in \T_{k_0}^{(h)}$. We have,
with $a_n\geq k=z_h(\bt)$: 
\begin{equation}
   \label{eq:phk0}
  \P(r_{h, k_0}(\tau_n)=\bt)
= \frac{1-q}{\eta q} \P(r_h(\tau)=\bt)\left(
  \gamma_h^k - R_{n,h}^1(k)- R_{n,h}^2(k)\right),
\end{equation}
with $\alpha_n=(\gamma_{n-h}-\kappa) (\gamma_{n-h}-1)$, 
$x_n=\gamma_n/\gamma_{n-h}$ and:
\begin{align}
   \label{eq:Rnh1}
0\leq R_{n,h}^1(k)
&\leq    b_{n,h} \,  \frac{ \alpha_n}{1-x_n}   \max(1,\kappa)^{k-1}\, 2^{2k-1}
\left(2+\left(\frac{\alpha_n}{1-x_n}\right)^{k-1} 
+ (\alpha_n a_n) ^{k-1} \right), 
\\
\label{eq:Rnh2}
R_{n,h}^2(k)&=(\kappa+1-\gamma)\frac{\P_k(Z_{n-h}=a_n)}{\P(Z_n=a_n)} \cdot
\end{align}
\end{lem}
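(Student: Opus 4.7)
The plan is to express $\P(r_{h,k_0}(\tau_n)=\bt)$ as a sum over the possible extensions of $\bt$ by additional children at the root, convert that sum into a generating-function coefficient, and then isolate the leading term $\gamma_h^k$ together with the two correction terms of the stated form.

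First I would use the definition of $\tau_n$ as $\tau$ conditioned on $\{Z_n=a_n\}$, the GW branching property at heights $1$ and $h$, and the geometric identity $p(k_0+j)/p(k_0)=\gamma^{-j}$ (valid since $k_0\ge 1$) to establish
\[
\P(r_{h,k_0}(\tau_n)=\bt) = \frac{\P(r_h(\tau)=\bt)}{\P(Z_n=a_n)}\sum_{j\ge 0}\gamma^{-j}\,\P\bigl(Z_{n-h}^{(k)}+Z_{n-1}^{(j)}=a_n\bigr),
\]
where the two independent summands represent the generation-$n$ descendants of the $k$ height-$h$ vertices of $\bt$ and of the $j$ extra root-subtrees, respectively. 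Since $\E[s^{Z_m^{(j)}}]=\ff_m(s)^j$, the sum equals $[s^{a_n}]\,\ff_{n-h}(s)^k/(1-\gamma^{-1}\ff_{n-1}(s))$. Exploiting the homographic form $\ff_m(s)=(\kappa+s(\gamma_m-1-\kappa))/(\gamma_m-s)$ together with $\ff(\gamma_n)=\gamma_{n-1}$ from \reff{eq:g-g}, a short computation yields
\[
\frac{1}{1-\gamma^{-1}\ff_{n-1}(s)} = \frac{\gamma}{L_n} + \frac{\gamma(\gamma_{n-1}-\gamma_n)}{L_n(\gamma_n-s)},\qquad L_n:=\gamma+\gamma_{n-1}-1-\kappa,
\]
together with the useful simplifications $L_n=(\gamma-1)(\gamma-\kappa)/(\gamma-\gamma_n)$ and $\gamma_{n-1}-\gamma_n=(\gamma_n-1)(\gamma_n-\kappa)/(\gamma-\gamma_n)$.

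Next I would extract $[s^{a_n}]$. The constant part contributes $(\gamma/L_n)\,\P_k(Z_{n-h}=a_n)$. For the simple-pole part I would expand $1/(\gamma_n-s)$ as a geometric series and invoke the crucial identity $\ff_{n-h}(\gamma_n)^k=\gamma_h^k$ (again from \reff{eq:g-g}) to write
\[
[s^{a_n}] \frac{\ff_{n-h}(s)^k}{\gamma_n-s} = \gamma_n^{-a_n-1}\bigl(\gamma_h^k-T_n\bigr),\qquad T_n := \sum_{\ell>a_n}\P_k(Z_{n-h}=\ell)\,\gamma_n^\ell.
\]
Combining everything, using $\P(Z_n=a_n)=(\gamma_n-\kappa)(\gamma_n-1)\gamma_n^{-a_n-1}$ and the identity $(1-q)/(\eta q)=\gamma/[(\gamma-\kappa)(\gamma-1)]$, recovers the stated prefactor and the bracket $\gamma_h^k - T_n + (\gamma-\gamma_n)\,\P_k(Z_{n-h}=a_n)/\P(Z_n=a_n)$. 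Matching against $\gamma_h^k-R_{n,h}^1(k)-R_{n,h}^2(k)$ with $R_{n,h}^2(k)=(\kappa+1-\gamma)\P_k(Z_{n-h}=a_n)/\P(Z_n=a_n)$ then forces $R_{n,h}^1(k)=T_n-(1+\kappa-\gamma_n)\P_k(Z_{n-h}=a_n)/\P(Z_n=a_n)$.

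The final and principal technical step is the bound on $R_{n,h}^1(k)$. Applying Lemma~\ref{lem:hyper} to expand both $T_n$ (via the series identity \reff{eq:serie} applied to tails of the form $\sum_{\ell>a_n}\binom{\ell-1}{i-1}x_n^\ell$) and the subtracted ratio (via Lemma~\ref{lem:calculG}), I would rewrite $R_{n,h}^1(k)$ as a sum over $i\in\{1,\dots,k\}$ of terms $\binom{k}{i}\kappa^{k-i}\alpha_n^i\,U_i$, where $U_i$ collects the tail contributions. The representation $\gamma_h=\gamma_{n-h}^{-1}(\kappa+\alpha_n x_n/(1-x_n))$, equivalent to $\ff_{n-h}(\gamma_n)=\gamma_h$, allows the comparison between the full series $\sum_{\ell\ge i}\binom{\ell-1}{i-1}x_n^\ell=(x_n/(1-x_n))^i$ and its truncation at $a_n$. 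The $\alpha_n/(1-x_n)$ factor then comes from the geometric-tail piece, the $(\alpha_n/(1-x_n))^{k-1}$ and $(\alpha_n a_n)^{k-1}$ terms from separately controlling the $i=k$ contribution by the full series and by the polynomial-correction tail, the $\max(1,\kappa)^{k-1}2^{2k-1}$ factor from bounding the binomial-and-$\kappa$ weights, and the overall $b_{n,h}$ prefactor from the $\P_k/\P$ ratio via Lemma~\ref{lem:calculG}. The crux is organizing this expansion so that the matching pieces of $T_n$ and the subtracted correction cancel to leave a manifestly non-negative residual of the stated size; recognizing the right pairing --- likely via the Vandermonde-type splitting of $\binom{\ell-1}{i-1}$ around $\ell=a_n$ --- is where the main difficulty lies.
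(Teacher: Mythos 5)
Your derivation of the exact identity takes a different route from the paper's: you keep the root sum $\sum_{j\ge0}\gamma^{-j}\P(Z_{n-h}^{(k)}+Z_{n-1}^{(j)}=a_n)$ as a generating-function coefficient and do a partial-fraction split of $1/(1-\gamma^{-1}\ff_{n-1}(s))$ around the pole $s=\gamma_n$, whereas the paper trades $(1-q)^i$ for $p(i)$ (producing the boundary term $B$, i.e.\ $-R^2_{n,h}$) and then collapses the inner sum via $\sum_i p(i)\P(Z^{(i)}_{n-1}=m)=\P(Z_n=m)$. Both routes rest on the same two keys, $\ff_{n-h}(\gamma_n)=\gamma_h$ and $\P(Z_n=a_n)=(\gamma_n-\kappa)(\gamma_n-1)\gamma_n^{-a_n-1}$, and yours is in fact the more careful one: you correctly handle the $\ell=a_n$ boundary, where the paper's step $\P(Z_n=a_n-\ell)/\P(Z_n=a_n)=\gamma_n^\ell$ silently fails (at $\ell=a_n$ one has $\P(Z_n=0)=1-\eta_n=\kappa/\gamma_n$, not $\eta_n q_n\gamma_n$). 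Carrying that through gives exactly your extra term $(1+\kappa-\gamma_n)\P_k(Z_{n-h}=a_n)/\P(Z_n=a_n)$, so the $R^1_{n,h}(k)$ forced by \reff{eq:phk0} and \reff{eq:Rnh2} is, as you conclude, $T_n-(1+\kappa-\gamma_n)\P_k(Z_{n-h}=a_n)/\P(Z_n=a_n)$ with $T_n=\sum_{\ell>a_n}\P(Z^{(k)}_{n-h}=\ell)\gamma_n^\ell$, not the pure tail $T_n$ that the paper's proof implicitly uses. (This has no effect on Proposition~\ref{prop:cv-vfat}, since the extra piece also tends to $0$ in the condensation regime.)

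The genuine gap is the proof of \reff{eq:Rnh1}. The paper's argument is short precisely because it treats $R^1_{n,h}(k)=T_n$: that quantity is manifestly non-negative, and the upper bound is obtained term-by-term from Lemma~\ref{lem:hyper}, the elementary inequality $(x+y)^j\le 2^{j-1}(x^j+y^j)$ applied to $\binom{\ell+a_n-1}{i-1}$, and geometric summation in $\ell$. Your $R^1$ is a difference of two quantities of comparable size, so neither $R^1\ge 0$ nor the stated upper bound follows by simply bounding $T_n$, and the cancellation argument you outline (Vandermonde-type splitting, pairing of tails) is left as a plan with the crucial step — showing the pieces combine into a manifestly non-negative residual of the right size — explicitly unresolved. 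That step must actually be supplied; it is not a routine adaptation of the paper's term-by-term estimate, and until it is done the proposal does not establish the lemma.
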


\begin{proof}
  Let $n>  h\geq 1$, $k_0\in \Np$  and $\bt\in \T_{k_0}^{(h)}$.  We set
  $k=z_h(\bt)$. For every $1\le j\le k_0$, we denote by $\bt_j$ the subtree rooted at the $j$-th offspring of the root i.e.
$$u\in\bt_j\iff ju\in\bt.$$  
   In  what follows,  we denote  by $\tilde Z^{(i)}$  a process
  distributed as $Z^{(i)}$ and independent of $Z^{(k)}$.  We have:
\begin{align*}
   \P(r_{h, k_0}(\tau_n)=\bt)
&= \sum_{i=0}^{+\infty }
p(i+k_0) \left[\prod_{j=1}^{k_0} \P(r_{h-1}(\tau)=\bt_j) \right]
  \frac{\P(Z_{n-h}^{(k)}+ 
\tilde   Z^{(i)}_{n-1}=a_n)}{\P(Z_n=a_n)}  \\
&= \P(r_h(\tau)=\bt)  \sum_{i=0}^{+\infty }
(1-q)^i  \frac{\P(Z_{n-h}^{(k)}+
  \tilde Z^{(i)}_{n-1}=a_n)}{\P(Z_n=a_n)}  \\
&= \frac{1-q}{\eta q} \P(r_h(\tau)=\bt) (A+B),
\end{align*}
where we used the branching property for the first and second equalities,
the independence of $Z^{(k)}$ and $\tilde Z^{(i)}$ for the third, 
where
\[
A= \sum_{\ell=0}^{a_n} \P(Z_{n-h}^{(k)}=\ell)
\sum_{i=0}^{+\infty }p(i)
\frac{\P(Z_{n-1}^{(i)}=a_n-\ell)}{\P(Z_n=a_n)} 
\text{ and }
B=\left( \frac{\eta q}{1-q} -(1-\eta)\right)\,
\frac{\P(Z_{n-h}^{(k)}=a_n)}{\P(Z_n=a_n)} \cdot 
\]
We have:
\[
A
=  \sum_{\ell=0}^{a_n} \P(Z_{n-h}^{(k)}=\ell)
 \frac{\P(Z_{n}=a_n-\ell)}{\P(Z_n=a_n)}  
=  \sum_{\ell=0}^{a_n} \P(Z_{n-h}^{(k)}=\ell)
\gamma_n^\ell
= \left(
  \ff_{n-h}\left(\gamma_n\right)^k -R^1_{n,h}(k)\right),
\]
where we used 
that
$k_\emptyset(\tau) $ has distribution $p$ for the first equality, that $Z_n$ has
distribution $\cg[\kappa, \gamma_n]$ for the second  one and thus
$\P(Z_n=k)=\eta_n q_n \gamma_n^{-(k-1)}$, and for the last one that:
\[
R^1_{n,h}(k)= \sum_{\ell>0} \P(Z_{n-h}^{(k)}=\ell+ a_n)
\gamma_n^{\ell+a_n}.
\]
We have, with
$\alpha_n=(\gamma_{n-h}-\kappa) (\gamma_{n-h}-1)$  and $x_n=\gamma_n/\gamma_{n-h}$:
\begin{align*}
\P(Z_{n-h}^{(k)}=\ell+a_n)\gamma_n^{\ell+a_n}
&= b_{n,h} \sum_{i=1}^{k} \binom{k}{i}  \, \binom{\ell+ a_n-1}{i-1}\, \kappa^{k-i}
\,  (\gamma_{n-h}-\kappa)^i (\gamma_{n-h}-1)^i
\gamma_{n-h}^{-\ell-k}\gamma_n^\ell\\
&\leq  b_{n,h} \, x_n^\ell\,  \max(1,\kappa)^{k-1}\, 
  \sum_{i=1}^{k} \binom{k}{i}  \, 
  \binom{\ell+ a_n-1}{i-1}\,   \alpha_n^i, 
\end{align*}
where  we  used  Lemma  \ref{lem:hyper}   for  the  first  equality  and
$\gamma_{n-h}\geq \max(1,\kappa)$ for the last. Using that $(x+y)^j\leq
2^{j-1} (x^j + y ^j)$ for $j\in \Np$ and $x,y\in (0, +\infty )$, we
deduce that:
\[
 \binom{\ell+ a_n-1}{i-1}\leq \frac{2^{i-1}}{(i-1)!} \left(\ell ^{i-1} +
 a_n^{i-1}\right). 
\]
We have the following rough bounds:
\begin{align*}
   0\leq  R^1_{n,h}(k)
& \leq   b_{n,h} \,   \max(1,\kappa)^{k-1}\, 2^{k-1}
\sum_{i=1}^{k} \alpha_n^i \binom{k}{i} \sum_{\ell>0}
\left(\frac{\ell ^{i-1}}{(i-1)!}  x_n^\ell+
 a_n^{i-1}x_n^\ell \right)\\
&\leq   b_{n,h} \,  \frac{x_n \alpha_n}{1-x_n}   \max(1,\kappa)^{k-1}\, 2^{k-1}
\sum_{i=1}^{k}  \binom{k}{i} 
\left(\left(\frac{\alpha_n}{1-x_n}\right)^{i-1} 
+ (\alpha_n a_n) ^{i-1} \right)\\
&\leq   b_{n,h} \,  \frac{ \alpha_n}{1-x_n}   \max(1,\kappa)^{k-1}\, 2^{2k-1}
\left(2+\left(\frac{\alpha_n}{1-x_n}\right)^{k-1} 
+ (\alpha_n a_n) ^{k-1} \right)\\
\end{align*}
where we used that $x_n\in (0, 1)$ as the sequence $(\gamma_m, m\in
\Np)$ is non-increasing and that $\sum_{\ell>0} \ell^{i-1} x^\ell/(i-1)!
\leq  x(1-x)^{i-1}$ for the last inequality but one. 
Then use          \reff{eq:g-g}, which gives
$\ff_{n-h}\left(\gamma_n\right)=\gamma_h$, to
get $A=\gamma_h^k - R^1_{n,h}(k)$ as well as 
\reff{eq:Rnh1}.

We      can      rewrite      the      constant      in      $B$      as
$ \left(  \frac{\eta q}{1-q} -(1-\eta)\right)= -  (\kappa+1-\gamma)$, so
that     $B=-R^2_{n,h}(k)$,     see     \reff{eq:Rnh2},     and     thus
$A+B=\gamma_h^k - R^1_{n,h}(k) - R^2_{n,h}(k)$. This ends the proof.
\end{proof}

\section{The Kesten regime or the not so fat case}\label{sec:nsf}

\subsection{The Kesten tree}
\label{sec:Kesten}
In  this  section,  we  denote  by  $\tau$  a  GW  tree  with  geometric
$p=\cg(\eta,q)$ with $\eta,q\in(0,1)$.  Recall that the extinction event
$\ce=\{H(\tau)<+\infty           \}$           has           probability
$\fc=\min(1,\kappa)$. Moreover, as we  assume $\eta<1$, we have $\fc>0$.
We define the probability distribution $\fp=(\fp(n), n\in \Nz)$ by:
\begin{equation}
   \label{eq:def-fp}
\fp(n)=\fc^{n-1} p(n) \quad\text{for $n\in \Nz$}.
\end{equation}

We denote by $\tau^{0,0}$ a random tree distributed as  $\tau$ conditionally  on the  extinction event
$\ce$, that  is a GW tree with offspring distribution $\fp$. We denote
by $\fm$  the mean  of $\fp$.  If $\mu\leq  1$, then  we have 
$\fp=p$, $\fm=\mu$, $\fc=1$ and that
$\tau^{0,0}$ is distributed as $\tau$. If $\mu>  1$, then  we have 
that $\fp$ is the geometric distribution $\cg(q, \eta)$,  $\fm=1/\mu$
and  $\fc=\kappa$. 
\medskip

Let $k\in \Np$. We define the $k$-th order size-biased
probability distribution of $p$ as $p_{[k]}=(p_{[k]}(n), n\in \Nz)$
defined by:
\begin{equation}
   \label{eq:def-biased-p}
p_{[k]}(n)= \frac{n!}{(n-k)!\ff^{(k)}(1)}\, p(n)
\quad\text{for $n\in \Nz$ and $n\geq k$}.
\end{equation}
The generating function of $p_{[k]}$ is
$\ff_{[k]}(s)=s^k\ff^{(k)}(s)/ \ff^{(k)}(1)$. 
The probability distribution $p_{[1]}$  is the so-called size-biased
probability distribution  of $p$.

For the distribution $\cg(\eta, q)$, we have 
$\ff^{(k)}(1)=k! \eta q^{-k}(1-q)^{k-1}$, so the $k$-th order size-biased
probability distribution of $p$ is given by:
\begin{equation}
   \label{eq:biased-fp}
p_{[k]}(n)=\binom{n}{k} q^{k+1} (1-q)^{n-k} \quad\text{for $n\in \Nz$ and $n\geq k$}.
\end{equation}

We now  define the so-called  Kesten tree $\hat \tau^0$  associated with
the  offspring distribution  $p$ as a two-type  GW tree where the  vertices are either  of type
$\rs$  (for survivor)  or of  type $\re$  (for extinction).  It is  then
characterized as follows.
\begin{itemize}
\item The number of offspring of a vertex depends, conditionally on the
  vertices of lower or same height, only on its own type (branching property).
   \item The root is of type $\rs$.
   \item A vertex of type $\re$ produces only vertices of type $\re$ with
     offspring distribution $\fp$. 
   \item The random number  of children of a vertex of  type $\rs$ has the
     size-biased distribution  of $\fp$  that is $\fp_{[1]}$  defined by
     \reff{eq:def-biased-p}  with  $k=1$.  Furthermore, all of the children  are of type
     $\re$  but  one,  uniformly  chosen  at random,  which  is  of  type
     $\rs$. 
\end{itemize}
Informally the
individuals of  type $\rs$ in  $\hat \tau^0$  form an infinite  spine on
which   are  grafted   independent  GW   trees  distributed   as
$\tau^{0,0}$. 

\medskip
We define  $\tau^0=\ske(\hat \tau^0)$  as the tree  $\hat \tau^0$
when one forgets the types of the vertices. The distribution of  $\tau^0$ is given in  the following classical
result.
\begin{lem}
   \label{lem:=distrib}
   Let  $p=\cg(\eta,q)$ with $\eta,q\in(0,1)$. The distribution of $\tau^0$ is
   characterized by: for all   $n\geq   h\geq    1$   and 
   $\bt\in  \Tf^{(h)}$  with  $k=z_h(\bt)$:
\begin{equation}
   \label{eq:ph0}
  \P(r_{h}(\tau^0))=\bt)
= k \fc^{k-1} \fm^{-h} \, \P(r_h(\tau)=\bt).
\end{equation}
\end{lem}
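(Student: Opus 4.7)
The plan is to compute $\P(r_h(\tau^0)=\bt)$ directly from the two-type construction of $\hat\tau^0$ by summing over the finitely many possible positions of the infinite spine inside $r_h(\bt)$. By the construction rules (root is of type $\rs$, and every type-$\rs$ vertex produces exactly one type-$\rs$ child), the type-$\rs$ vertices form an infinite path $v_0=\emptyset, v_1, v_2,\ldots$ with $|v_\ell|=\ell$. So, restricted to height $h$, the type decoration on the underlying tree $\bt=\ske(r_h(\hat\tau^0))$ is entirely determined by the choice of $v_h\in\bt$ with $|v_h|=h$, and there are exactly $k=z_h(\bt)$ such choices.

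For a fixed spine path $(v_0,\ldots,v_h)$, the branching property conditional on the types, together with the construction rules, gives that each non-spine vertex $u\in r_{h-1}(\bt)$ contributes a factor $\fp(k_u(\bt))$, while each spine vertex $v_\ell$ with $\ell<h$ contributes $\fp_{[1]}(k_{v_\ell}(\bt))\cdot \tfrac{1}{k_{v_\ell}(\bt)}$: the size-biased offspring distribution applied to its number of children, times the $1/k_{v_\ell}(\bt)$ probability that the uniformly chosen type-$\rs$ child is precisely $v_{\ell+1}$. By the explicit form \reff{eq:def-biased-p} of $p_{[1]}$ applied to $\fp$, this spine contribution simplifies to $\fp(k_{v_\ell}(\bt))/\fm$.

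Summing over the $k$ spines, each of which produces the same product (because $\fp(k_u(\bt))$ appears for every $u\in r_{h-1}(\bt)$, whether $u$ lies on the spine or not), yields
$$\P(r_h(\tau^0)=\bt) \;=\; k\,\fm^{-h}\prod_{u\in r_{h-1}(\bt)} \fp(k_u(\bt)).$$
To recover \reff{eq:ph0}, substitute $\fp(n)=\fc^{n-1}p(n)$ from \reff{eq:def-fp} and use the counting identity $\sum_{u\in r_{h-1}(\bt)}(k_u(\bt)-1)=z_h(\bt)-1=k-1$, obtained by writing the sum of out-degrees over $r_{h-1}(\bt)$ as $\sharp r_h(\bt)-1$ and subtracting $\sharp r_{h-1}(\bt)$. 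This pulls out the factor $\fc^{k-1}$ and leaves $\prod_{u\in r_{h-1}(\bt)} p(k_u(\bt))=\P(r_h(\tau)=\bt)$, completing the proof.

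The argument is essentially combinatorial bookkeeping; the one place that requires mild care is checking that the $\fp_{[1]}/k$ factors at spine vertices combine with the $\fp$ factors at non-spine vertices into a single symmetric product $\prod_{u\in r_{h-1}(\bt)}\fp(k_u(\bt))$ that is independent of which spine was chosen, so that the sum over spines reduces to the multiplicative factor $k$.
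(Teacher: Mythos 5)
Your proof is correct and follows essentially the same route as the paper's: fix the spine position at height $h$, observe that each spine vertex contributes $\fp_{[1]}(k_u(\bt))/k_u(\bt)=\fp(k_u(\bt))/\fm$ so all $k$ spine choices give the same product, and then convert from $\fp$ to $p$ via the counting identity $\sum_{u\in r_{h-1}(\bt)}(k_u(\bt)-1)=k-1$. The only cosmetic difference is that you factor out $\fm^{-h}$ first and substitute $\fp=\fc^{\,\cdot-1}p$ afterward, whereas the paper does both in one step.
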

We give a short proof of this  well-known result. 
\begin{proof}
  Since $\tau^0$ belongs to  $\Tf$ and has infinite  height, its
  distribution  is   indeed  characterized  by  \reff{eq:ph0}   for  all
  $n\geq h\geq 1$ and $\bt\in \Tf^{(h)}$ with $k=z_h(\bt)$.

Let $n\geq h\geq 1$,  $\bt\in \Tf^{(h)}$ and
   $v\in \bt$ such that $|v|=h$. Let
   $V$ be the vertex of type $\rs$ at level $h$ in $\hat \tau^0$. 
We
   have, with $k=z_h(\bt)$:
\begin{align*}
  \P(r_{h}(\tau^0)=\bt, V=v)
  &= \prod_{u \in \bt \backslash \anc(\{v\}); \, |u|<h} \fp(k_u(\bt))
    \prod_{u \in \anc(\{v\})} \inv{k_u(\bt)} \, \fp_{[1]}(k_u(\bt))\\
  &= \fm^{-h} \fc^{\sum_{u\in r_{h-1}(\bt)} (k_u(\bt) -1)}\,
    \prod_{u\in r_{h-1}(\bt)} p(k_u(\bt))\\
  &= \fm^{-h} \fc^{k-1}\, \P(r_h(\tau)=\bt) ,
\end{align*}
where we  used \reff{eq:def-biased-p} (with $k=1$,  $n=k_u(\bt)$ and $p$
replaced  by $\fp$)  and  \reff{eq:def-fp} (with  $n=k_u(\bt)$) for  the
second equality  and that  $\sum_{u\in r_{h-1}(\bt)}  (k_u(\bt) -1)=k-1$
for the last  one.  Summing over all $v\in \bt$  such that $|v|=h$ gives
the result.
\end{proof}

\subsection{Convergence of the not so fat geometric GW tree}

We  consider a  sequence $(a_n,  n\in \Np)$  with $a_n\in  \Np$ and  a
random tree  $\tau_n$ distributed as  the GW tree $\tau$  with offspring
distribution  $p=\cg(\eta,q)$ conditionally  on $\{Z_n=  a_n\}$.  We have the
following result.

\begin{prop}
   \label{prop:cv-nsfat}
   Let   $\eta\in    (0,1)$   and   $q\in   (0,    1)$.    Assume   that
   $\lim_{n\rightarrow\infty   }   a_n   \mu^n   =0$   if   $\mu<   1$,
   $  \lim_{n\rightarrow\infty  }  a_n  n^{-2}   =0  $  if  $\mu=1$  or
   $ \lim_{n\rightarrow\infty } a_n \mu^{-n}  =0 $ if $\mu>1$.  Then we
   have the following convergence in distribution:
\[
\tau_n\; \xrightarrow[n\rightarrow \infty ]{\textbf{(d)}}\;  \tau^0. 
\]
\end{prop}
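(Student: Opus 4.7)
The plan is to use the local convergence characterization \reff{eq:cv-loi}: since $\tau^0$ a.s.\ has infinite height (its spine is infinite by construction), it suffices to show, for every $h\in \Np$ and every $\bt\in \Tf^{(h)}$, that $\P(r_h(\tau_n)=\bt)\to \P(r_h(\tau^0)=\bt)$. Combining the explicit formula \reff{eq:ph} with Lemma \ref{lem:=distrib}, this reduces, with $k=z_h(\bt)$, to proving
\[
\lim_{n\to\infty} \frac{\P_k(Z_{n-h}=a_n)}{\P(Z_n=a_n)} = k\,\fc^{k-1}\,\fm^{-h}.
\]

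To evaluate this ratio I would apply Lemma \ref{lem:calculG}, which rewrites it as $b_{n,h}\sum_{i=1}^{k}\binom{k}{i}\kappa^{k-i}G_{n,h}(k,i)$, and then carry out three asymptotic computations. First I would show $b_{n,h}\to 1$: since $\log b_{n,h}=-a_n\log(\gamma_{n-h}/\gamma_n)$ and \reff{eq:equiv-gg} gives $\log(\gamma_{n-h}/\gamma_n)$ on the scale $c_n^{-1}$ in each of the three regimes, the Kesten hypothesis $a_n/c_n\to 0$ forces $\log b_{n,h}\to 0$. Second, I would compute the limit of the $i=1$ term using \reff{eq:equiv-gg-1c0} and $\gamma_n,\gamma_{n-h}\to\max(1,\kappa)$: in the subcritical case $(\gamma_{n-h}-\kappa)/(\gamma_n-\kappa)\to \mu^{-h}$ while the other factors stay bounded, in the critical case everything converges cleanly to $1$, and in the supercritical case $(\gamma_{n-h}-1)/(\gamma_n-1)\to \mu^h$ while the others stay bounded; in every regime the $i=1$ limit combines with $\binom{k}{1}\kappa^{k-1}$ to produce exactly $k\fc^{k-1}\fm^{-h}$ (using $\fc=1$, $\fm=\mu$ when $\mu\le 1$, and $\fc=\kappa$, $\fm=1/\mu$ when $\mu>1$).

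Third, I would show that every term with $i\ge 2$ vanishes. Using the crude bound $\binom{a_n-1}{i-1}\le a_n^{i-1}/(i-1)!$ together with \reff{eq:equiv-gg-1c0}, one checks that
\[
\binom{a_n-1}{i-1}\,\frac{(\gamma_{n-h}-\kappa)^i(\gamma_{n-h}-1)^i}{(\gamma_n-\kappa)(\gamma_n-1)}
\]
is bounded, up to a constant depending on $h$ and $i$, by $(a_n/c_n)^{i-1}$, which tends to $0$ by the Kesten hypothesis (the product $(\gamma_n-\kappa)(\gamma_n-1)$ decays on scale $c_n^{-1}$, while the numerator decays on scale $c_n^{-i}$ and the binomial grows at most as $a_n^{i-1}$). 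After multiplying by $b_{n,h}\to 1$ only the $i=1$ contribution survives in the limit.

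The main technical obstacle is the unified bookkeeping across the three regimes, because the expansions of $\gamma_n-\kappa$, $\gamma_n-1$, and $\gamma_n/\gamma_{n-h}$ have structurally different leading orders in the subcritical, critical, and supercritical cases; nevertheless, in each case they combine so that $G_{n,h}(k,1)$ has the correct limit and the higher-order terms $G_{n,h}(k,i)$, $i\ge 2$, decay at precisely the rate $(a_n/c_n)^{i-1}$ that the Kesten hypothesis kills. As an aside, this computation also identifies the limit $\lim_{n\to\infty}\P_k(Z_{n-h}=a_n)/\P(Z_n=a_n)=k\fc^{k-1}\fm^{-h}$ announced in the remark following the introduction.
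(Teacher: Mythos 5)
Your proposal is correct and follows essentially the same route as the paper: reduce via \reff{eq:cv-loi}, \reff{eq:ph} and Lemma \ref{lem:=distrib} to the limit of $\P_k(Z_{n-h}=a_n)/\P(Z_n=a_n)$, then apply Lemma \ref{lem:calculG} with the asymptotics \reff{eq:equiv-gg}, \reff{eq:lim-g} and \reff{eq:equiv-gg-1c0} to get $b_{n,h}\to 1$, the $i=1$ term converging to $k\fc^{k-1}\fm^{-h}$, and the $i\ge 2$ terms vanishing. Your explicit bound showing the higher-order terms decay like $(a_n/c_n)^{i-1}$ simply makes precise what the paper asserts directly from those estimates.
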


The critical case, $\mu=1$, appears in  Corollary 6.2 of
\cite{ad:llcgwtisc} for general offspring distribution with second
moment.

\begin{proof}
  Let  $h\in \Np$ and $k\in \Np$. Recall
  the definitions of $b_{n,h}$ in  \reff{eq:def-bnh} and of $G_{n,h}$ in
  \reff{eq:Gnh}.   According to  Lemma  \ref{lem:calculG},  we have  for
  $n\geq h\geq 1$ and $k\in \Np$:
\[
\frac{\P_k(Z_{n-h}=a_n)}{\P(Z_n=a_n)}=b_{n,h} \, \sum_{i=1}^k \binom{k}{i} \,
\kappa^{k-i}\,  G_{n,h}(k,i).
\]

According         to          \reff{eq:def-bnh},         we         have
$b_{n,h}=\exp{(-a_n  \log(\gamma_{n-h}/\gamma_{n}))}$.   We deduce  from
\reff{eq:equiv-gg}  and  the  hypothesis   on  $(a_n,  n\in  \Np)$  that
$  \lim_{n\rightarrow\infty }  a_n \log(\gamma_{n-h}/\gamma_n)=  0$ and
thus $ \lim_{n\rightarrow\infty } b_{n,h}= 1$.   We   deduce   from  \reff{eq:Gnh},   \reff{eq:lim-g}   and
\reff{eq:equiv-gg-1c0}      that,      for       $k\geq      i>      1$,
$\lim_{n\rightarrow\infty } G_{n,h}(k,i)=0$ and for $k\geq 1$:
\[
\lim_{n\rightarrow\infty
}  G_{n,h}(k,1)=
\begin{cases}
  \kappa^{1-k} \mu^{-h}   & \text{if $\mu<1$},\\
 1   & \text{if $\mu=1$},\\
 \mu^h   & \text{if $\mu>1$}.
\end{cases}
\]
We deduce that:
\[
\lim_{n\rightarrow\infty }
\frac{\P_k(Z_{n-h}=a_n)}{\P(Z_n=a_n)}=
\begin{Bmatrix}
  k \mu^{-h} & \text{if $\mu<1$}\\
 k   & \text{if $\mu=1$}\\
 k \kappa^{k-1} \mu^h   & \text{if $\mu>1$}
\end{Bmatrix}
= k\fc^{k-1} \fm^{-h}.
\]
Then, as a.s. $H(\tau^0)=+\infty $, we can  use the
characterization \reff{eq:cv-loi} of the convergence  in $\Tf$, as well
as \reff{eq:ph}
and Lemma  \ref{lem:=distrib} to conclude. 

\end{proof}

\section{The Poisson regime or the fat case}\label{sec:f}

\subsection{An infinite Poisson tree}
\label{sec:Poisson}
Let  $\theta\in (0,  +\infty )$.   We  consider a  two-type random  tree
$\hat  \tau^\theta$ where  the vertices  are either  of type  $\rs$ (for
survivor)   or   of   type    $\re$   (for   extinction).    We   define
$\tau^\theta=\ske(\hat \tau^\theta)$ as the tree $\hat \tau^\theta$ when
one forgets the types of the  vertices of $\hat \tau^\theta$.  We denote
by $\cs_h=\{u\in \tau^\theta;\, |u|=h \text{ and $u$ is of type $\rs$ in
  $\hat \tau^\theta$}\}$ the set of  vertices of $\hat \tau^\theta$ with
type     $\rs$     at     level     $h\in     \Nz$.      Notice     that
$(\cs_\ell, 0\leq  \ell<h)=\anc(\cs_h)$ and  that $\hat  \tau^\theta$ is
completely  characterized  by  $\tau^\theta$ and  $(\cs_h,  h\in  \Nz)$.
Recall  $\fp$   defined  by   \reff{eq:def-fp}  and  the   $k$-th  order
size-biased distribution, $p_{[k]}$,  defined by \reff{eq:def-biased-p}.
The random tree $\hat \tau^\theta$ is defined as follows.

\begin{itemize}
   \item The root is of type $\rs$ (i.e. $\cs_0=\{\emptyset\}$).
\item The number of offspring of a vertex of type $\re$ does not depend on the
  vertices of lower or same height (branching property only for
  individuals of type $\re$).
   \item A vertex of type $\re$ produces only vertices of type $\re$ with
     offspring distribution $\fp$ (as in the      Kesten tree).
\item  For  $h\in  \Nz$,  let $\Delta_h=\sharp\cs_{h+1}  -\sharp\cs_{h}$  be  the
  increase  of  number   of  vertices  of  type   $\rs$ between generations $h$ and $h+1$.  Conditionally  on
  $r_h(\tau^\theta)$  and   $(\cs_\ell,  0\leq   \ell\leq  h)$,
  $\Delta_h$  is distributed  as  a Poisson  random  variable with  mean
  $\theta \zeta_h$, where:
\begin{equation}\label{eq:zeta_h}
\zeta_h=
\begin{cases}
 \mu^{-h-1} (1-\mu) (\kappa -1)/\kappa& \text{if $\mu<1$},\\
(\gamma-1)  & \text{if $\mu=1$},\\
 \mu^{h} (\mu-1) (1-\kappa)& \text{if $\mu>1$}.
\end{cases}
\end{equation}
The vertex  $u\in \cs_{h}$  has $\kappa^\rs(u)\geq  1$ children  of type
$\rs$,  with  all  the configurations  $(\kappa^\rs(u),  u\in  \cs_{h})$
having        the        same         probability,        that        is
$         1/\binom{\sharp          \cs_{h+1}-1}{\sharp\cs_h         -1}=
1/\binom{\sharp\cs_{h+1}-1}{\Delta_h}                                 $.
(This  breaks the  branching property!)   Furthermore, conditionally  on
$r_h(\tau^\theta)$, $\cs_h$ and  $(\kappa^\rs(v)=s_v\geq 1, v\in \cs_h)$,
the vertex $u\in \cs_h$ has  $\kappa^\re(u)$ vertices of type $\re$ such
that  $k_u(\tau^\theta)=\kappa^\rs(u)+\kappa  ^\re(u)$ has  distribution
$\fp_{[s_u]}$  and  the  $s_u$  individuals of  type  $\rs$  are  chosen
uniformly at random among the $k_u(\tau^\theta)$ children.
  
More  precisely,  for  $h\in \Nz$,  $n\in  \Nz$,  $u\in  \cs_{h}$,
$k_u\geq s_u\geq 1$,  $A_u\subset \{1, \ldots,  k_u\}$ with   $\sharp A_u=s_u$
and    $\sum_{u\in    \cs_h}    s_u=n+\sharp\cs_h$,    we    have    with
$k=\sum_{u\in \cs_h} k_u$:
\begin{multline}
\label{eq:pre-ch}
\P\left(\kappa^\rs(u)+\kappa^\re(u)=k_u \text{ and } \cs_{h+1} \cap\{u1,
    \ldots, uk_u\}=uA_u %
\,\,\,\forall u\in \cs_h\,  |\,
    r_h(\tau^\theta), \cs_h\right) 
\\
\begin{aligned}
 &  = \frac{(\theta \zeta_h)^n}{n!}\expp{-\theta \zeta_h} 
\inv{\binom{\sharp \cs_h+n-1}{n}}\,\,
\prod_{u\in \cs_h}\inv{\binom{k_u}{s_u}}  \fp_{[s_u]}(k_u)\\
 &  = \frac{(\sharp \cs_h-1)!}{(\sharp \cs_h+n-1)!}
(\theta (\gamma-1) \zeta_h )^n\expp{-\theta \zeta_h} \prod_{u\in \cs_h} \fp(k_u)
\begin{cases}
   \mu^{- \sharp\cs_h}  & \text{if $\mu\leq 1$},\\
  \mu^{ \sharp \cs_h} \left(\frac{\mu}{\kappa}\right)^{n} & \text{if $\mu>1$},
\end{cases}
\end{aligned}
\end{multline}
where we used \reff{eq:biased-fp} and \reff{eq:def-fp} as well as
\reff{eq:h+q} for the last equality. 
\end{itemize}

By construction, a.s.  individuals of type $\rs$ have a progeny which does
not suffer extinction  whereas individuals of type $\re$  have a progeny
which      suffers    extinction.     Since the individuals of type
$\rs$ do not satisfy the branching property, the random tree $\hat
\tau^\theta$ is not a multi-type GW tree.   We      stress     out      that
$\hat   \tau^\theta$  truncated at level $h$   can be  recovered from
$r_{h}(\tau^\theta)$  and $\cs_h$ as  all the ancestors  of a
vertex of type $\rs$ are also of  type $\rs$ and a vertex of type $\rs$ has
at least one children of type $\rs$.

We have the following   result. 

\begin{lem}
   \label{lem:=distrib1}
   Let    $\eta\in    (0,1]$    and    $q\in    (0,    1)$.     Let
   $\theta\in  (0,  +\infty   )$.  Let  $n\geq  h\geq   1$  and  $\bt\in
   \Tf^{(h)}$. We have, with $k=z_h(\bt)$:
\[
  \P(r_{h}(\tau^\theta)=\bt)
=   \ch(h,k,\theta)\, \, \P(r_h(\tau)=\bt),
\]
where $ \ch(h,k,\theta)$ is equal to 
\begin{align*}
&\mu^{-h} 
 \expp{-\theta (\mu^{-h}-1) (\kappa -1)/\kappa} \, \sum_{i=1}^k \binom{k}{i}
\,\frac{\left(\theta \mu^{-h} {(\kappa-1)^2}/{\kappa} \right)^{i-1}} {(i-1)!}   \quad &
 \text{if $ \mu<1$},\\
& \expp{-\theta(\gamma-1)h} \, \sum_{i=1}^k \binom{k}{i}
\,\frac{\left(\theta(\gamma-1)^2\right)^{i-1}} {(i-1)!} 
  \quad &
 \text{if $ \mu=1$},\\
&\mu^{h} 
 \expp{-\theta (\mu^{h}-1) (1-\kappa)}\, \sum_{i=1}^k \binom{k}{i}
\kappa^{k-i} \,\frac{\left(\theta \mu^{h} (1-\kappa)^2 \right)^{i-1}} {(i-1)!} 
    \quad &
 \text{if $ \mu>1$}.
\end{align*}
\end{lem}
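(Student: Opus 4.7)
The plan is to decompose $\P(r_h(\tau^\theta)=\bt)$ by conditioning on $\cs_h$: write
$\P(r_h(\tau^\theta)=\bt) = \sum_{S_h} \P(r_h(\tau^\theta)=\bt,\,\cs_h = S_h)$,
the sum running over non-empty subsets $S_h$ of the level-$h$ vertices of $\bt$ (and $S_\ell$ for $\ell<h$ is automatically the set of ancestors of $S_h$ at level $\ell$, since every ancestor of a type-$\rs$ vertex is of type $\rs$). For each fixed $S_h$, I would express the joint probability as a product over levels $\ell=0,\ldots,h-1$, combining formula \reff{eq:pre-ch} for the survivor set with the independent $\fp$-distributed offspring at the $\re$-vertices of level $\ell$. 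Using $\fp_{[s]}(k)\binom{k}{s}^{-1} = s!\fp(k)/\fp^{(s)}(1)$ and $\fp(n) = \fc^{n-1}p(n)$, I can factor out $\P(r_h(\tau)=\bt) = \prod_{u\in r_{h-1}(\bt)} p(k_u(\bt))$ together with a telescoping factor $\fc^{z_h(\bt)-1} = \fc^{k-1}$, leaving only Poisson weights, uniform-partition factors $\binom{m_\ell+n_\ell-1}{n_\ell}^{-1}$, and terms $s_u!/\fp^{(s_u)}(1)$, where $m_\ell = |S_\ell|$ and $n_\ell = m_{\ell+1} - m_\ell$.

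Two explicit identities then drive the computation. First, a direct derivative computation for geometric $\fp$ gives $s!/\fp^{(s)}(1) = \fm^{-1} r^{s-1}$, with $r = \gamma-1$ when $\mu \le 1$ and $r = \mu(\gamma-1)/\kappa$ when $\mu > 1$; consequently $\prod_{u\in S_\ell} s_u!/\fp^{(s_u)}(1) = \fm^{-m_\ell}\, r^{n_\ell}$, which depends on $S_h$ only through the sizes $m_\ell$, not on the specific partition $(s_u)$. Second, a direct check of \reff{eq:zeta_h} gives the uniform scaling $\zeta_\ell = \zeta_0 \fm^{-\ell}$ in all three regimes. Combining these with the telescoping identity $\binom{m_\ell+n_\ell-1}{n_\ell}^{-1}/n_\ell! = (m_\ell-1)!/(m_{\ell+1}-1)!$ (which collapses the product to $1/(i-1)!$ with $i=m_h$) and the algebraic identity $\sum_{\ell=0}^{h-1}(m_\ell + \ell n_\ell) = h + (h-1)(i-1)$, the intermediate sizes $m_\ell$ disappear entirely from the product.

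After this simplification, the summand depends on $S_h$ only through $i=|S_h|$, so the sum over $S_h$ with $|S_h|=i$ contributes $\binom{k}{i}$ identical copies, yielding
\[
\ch(h,k,\theta) = \fc^{k-1}\,\fm^{-h}\, \expp{-\theta\sum_{\ell=0}^{h-1}\zeta_\ell}\; \sum_{i=1}^{k} \binom{k}{i}\, \frac{\bigl(\theta\, \zeta_0\, r\, \fm^{-(h-1)}\bigr)^{i-1}}{(i-1)!}.
\]
Rewriting $\fc^{k-1} = \fc^{k-i}\fc^{i-1}$ absorbs $\fc^{i-1}$ into the power. Case-by-case substitution of $\fc,\fm,r,\zeta_0$, evaluation of the geometric sum $\sum_{\ell=0}^{h-1}\zeta_\ell$, and application of $\gamma-\kappa = \mu(\gamma-1)$ from \reff{eq:g-c,g-1} then produce the three stated formulas.

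The main obstacle is the cancellation of $m_\ell$-dependence at the end of the second paragraph: the $\fm^{-\ell n_\ell}$ contribution coming from $(\theta\zeta_\ell)^{n_\ell}$ and the $\fm^{-m_\ell}$ contribution from $\prod s_u!/\fp^{(s_u)}(1)$ must combine to yield an exponent depending only on $h$ and $i$, which works because $\zeta_\ell\propto\fm^{-\ell}$. This is not automatic; it relies both on the special form of the geometric $\fp$ (making $\prod s_u!/\fp^{(s_u)}(1)$ partition-independent) and on the precise intensities chosen for the Poisson immigration. I would verify both identities explicitly by direct algebra before assembling the final closed form.
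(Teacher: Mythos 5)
Your proof is correct and follows essentially the same route as the paper: decompose over the survivor set $\cs_h=S_h$, observe via the telescoping identities that $\P(r_h(\tau^\theta)=\bt,\cs_h=S_h)$ depends on $S_h$ only through $i=\sharp S_h$, and sum the $\binom{k}{i}$ identical contributions. The only (cosmetic) difference is that you carry out the algebra uniformly in $\fc,\fm,r,\zeta_0$ across the three regimes, whereas the paper substitutes the explicit super-critical values and declares the other two cases similar; your key identities (the collapse of the factorials to $1/(i-1)!$ and $\sum_\ell(m_\ell+\ell n_\ell)=h+(h-1)(i-1)$) are equivalent to the paper's $\sum_\ell(\ell+1)\Delta_\ell+\sharp S_\ell=h\sharp S_h$.
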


\begin{rem}
   \label{rem:cvtq-t0}
We deduce from Lemma \ref{lem:=distrib} that $
\tau^\theta\; \xrightarrow[\theta\rightarrow 0 ]{\textbf{(d)}} \; \tau^0$. Therefore the trees $\tau^\theta$ appear as a generalization of the Kesten tree. We will also prove in Proposition \ref{prop:cvtq-tinfty} that a limit also exists when $\theta\to+\infty$.
\end{rem}

\begin{proof}

   We consider only the super-critical case. The sub-critical case and
   the critical case can be handled in a similar way.
\medskip

Let        $h\in        \Np$,       $\bt\in        \Tf^{(h)}$        and
$S_h\subset \{u\in \bt;  \, |u|=h\}$ be non empty.  In  order to shorten
the notations, we set $\ca=\anc(S_h)$.   Notice that $\ca$ is tree-like.
We      set,      for       $\ell\in      \{0,      \ldots,      h-1\}$,
$S_\ell=\{u\in \ca,  \, |u|=\ell\}$ the  vertices at level  $\ell$ which
have     at      least     one      descendant     in      $S_h$     and
$\Delta_\ell=\sharp  S_{\ell+1}  -  \sharp S_{\ell}$.   We  recall  that
$\hat  \tau^\theta$  truncated  at  level  $h$  can  be  recovered  from
$r_{h}(\tau^\theta)$        and        $\cs_h$.        We        compute
$\cc_{S_h}=\P(r_{h} (\tau^\theta)=\bt  ,\, \cs_h=S_h)$.  We  have, using
\reff{eq:pre-ch} and \reff{eq:zeta_h}:

\begin{align*}
   \cc_{S_h}
&= \left[\prod_{u\in r_{h-1}(\bt), u\not\in \ca} \fp(k_u(\bt))
  \right]\\
&\hspace{2cm}\prod_{\ell=0}^{h-1}\left[ \frac{(\sharp S_\ell -1)!}{(\sharp S_{\ell+1} -1)!}(\theta
  (\gamma-1) \zeta_\ell)^{\Delta_\ell} \expp{-\theta \zeta_\ell}
 \left[\prod_{u\in  S_\ell} \fp(k_u(\bt))
  \right] \mu^{\sharp S_\ell} \left(\frac{\mu}{\kappa}\right)^{\Delta_\ell} \right]\\
&= \left[\prod_{u\in r_{h-1}(\bt)} \fp(k_u(\bt)) \right]\,
\frac{\left(\frac{\theta(\gamma-1)(\mu -1)(1-\kappa)}{\kappa}\right)^{\sum_{\ell=0}^{h-1}
  \Delta_\ell}}{(\sharp S_h-1)!}
\expp{-\theta   \sum_{\ell=1}^{h-1} \zeta_\ell}
\, \, \prod_{\ell=0}^{h-1} \mu^{(\ell+1)\Delta_\ell +\sharp S_\ell}  \\
&=\left[\prod_{u\in r_{h-1}(\bt)}\!\!\! \kappa^{k_u(\bt)-1}
  \right]\,\left[\prod_{u\in r_{h-1}(\bt)} p(k_u(\bt))
  \right]\,
\frac{\left(\frac{\theta(1-\kappa)^2}{\kappa}\right)^{\sharp S_h -1}}
{(\sharp S_h-1)!}
\expp{-\theta   (\mu^h-1)(1-\kappa)}
\, \, \mu^{ h \sharp S_h }\\
&= \kappa^{z_h(\bt)-\sharp S_h}\,  \P(r_h(\tau)=\bt) 
\,\, \frac{\mu^h \left(\theta\mu^h(1-\kappa)^2\right)^{\sharp S_h -1}}
{(\sharp S_h-1)!}
\expp{-\theta   (\mu^h-1)(1-\kappa)},
\end{align*}
where we used for the third equality that  $\sum_{\ell=0}^ {h-1}
\Delta_\ell= \sharp S_h-1$, 
$\sum_{\ell=1}^{h-1}
  \zeta_\ell= (\mu^{h}-1) (1-\kappa)$ and 
$\sum_{\ell=0}^{h-1} (\ell+1) \Delta_\ell + \sharp S_\ell
= \sum_{\ell=0}^{h-1} (\ell+1)\sharp S_{\ell+1} - \ell \sharp S_\ell=h \sharp S_h$. 
Since $\cc_{S_h}$ depends only of $\sharp S_h$, we shall write $\cc_{\sharp S_h}$
for $\cc_{S_h}$. 
Set $k=z_h(\bt)=\sharp\{u\in \bt; \, |u|=h\}$. Since $\sharp S_h\geq 1$ as the
root if of type $\rs$, we obtain:
\[
\P(r_{h} (\tilde \tau^\theta)=\bt )
= \sum_{i=1}^k   \sum_{S_h\subset \{u\in \bt; \, |u|=h\}}\,
  \ind_{\{\sharp S_h=i\}}\, 
  \cc_{S_h}
= \sum_{i=1}^k   \binom{k}{i} \, \cc_i
=\P(r_h(\tau)=\bt)\,  \ch(h,k,\theta),
\]
where we used the definition of $\ch$ for the last equality. 
\end{proof}

\subsection{Convergence of the fat geometric GW tree}

We consider a sequence $(a_n, n\in \Np)$, with $a_n\in \Np$ and 
$\tau_n$    a  random  tree   distributed  as  the  GW   tree  $\tau$
conditionally on $\{Z_n= a_n\}$. We have the following result. 

\begin{prop}
   \label{prop:cv-fat}
   Let     $\eta\in    (0,1]$,     $q\in     (0,    1)$ and 
   $\theta\in       (0,       +\infty        )$.       Assume       that
   $\lim_{n\rightarrow\infty  } a_n  \mu^n  =\theta $  if  $\mu< 1$  or
   $\lim_{n\rightarrow\infty  } a_n  n^{-2} =\theta  $ if  $\mu= 1$  or
   $\lim_{n\rightarrow\infty } a_n \mu^{-n} =\theta $ if $\mu> 1$. Then
   we have the following convergence in distribution:
\[
\tau_n\; \xrightarrow[n\rightarrow \infty ]{\textbf{(d)}} \;  \tau^\theta. 
\]
\end{prop}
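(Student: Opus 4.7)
The plan is to follow the same template as the proof of Proposition \ref{prop:cv-nsfat}: use the characterization \eqref{eq:cv-loi} of local convergence in $\Tf$ (valid since a.s. $H(\tau^\theta)=+\infty$, because by construction the set of type-$\rs$ vertices is infinite), combine \eqref{eq:ph} with Lemma \ref{lem:=distrib1}, and thereby reduce everything to showing
\[
\lim_{n\to\infty}\frac{\P_k(Z_{n-h}=a_n)}{\P(Z_n=a_n)}=\ch(h,k,\theta)
\]
for every fixed $h\in\N^*$ and $k\in\N^*$, where $k=z_h(\bt)$. Applying Lemma \ref{lem:calculG} rewrites the left-hand side as $b_{n,h}\sum_{i=1}^k\binom{k}{i}\kappa^{k-i}G_{n,h}(k,i)$, so it suffices to compute $\lim b_{n,h}$ and $\lim G_{n,h}(k,i)$ separately in each of the three regimes $\mu<1$, $\mu=1$, $\mu>1$.

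First I would handle the prefactor $b_{n,h}=\exp(-a_n\log(\gamma_{n-h}/\gamma_n))$. The logarithmic asymptotics \eqref{eq:equiv-gg}, combined with the hypothesis $a_n/c_n\to\theta$, directly give
\[
\lim_{n\to\infty} b_{n,h}=
\begin{cases}
\expp{-\theta(\mu^{-h}-1)(\kappa-1)/\kappa}& \text{if }\mu<1,\\
\expp{-\theta(\gamma-1)h}& \text{if }\mu=1,\\
\expp{-\theta(\mu^{h}-1)(1-\kappa)}& \text{if }\mu>1,
\end{cases}
\]
which already matches exactly the exponential factor appearing in $\ch(h,k,\theta)$.

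Next I would tackle $G_{n,h}(k,i)$. From \eqref{eq:lim-g} we know the limit of $\gamma_n/\gamma_{n-h}^k$ (equal to $\kappa^{1-k}$ if $\mu<1$, to $1$ if $\mu=1$, and to $1$ if $\mu>1$); and \eqref{eq:equiv-gg-1c0} gives the exact order of $(\gamma_m-\kappa)(\gamma_m-1)$. Feeding these into \eqref{eq:Gnh} and using $\binom{a_n-1}{i-1}\sim a_n^{i-1}/(i-1)!$, the scaling $a_n\sim\theta c_n$ makes the powers of $\mu^n$ (resp.\ $n^{-2}$, resp.\ $\mu^{-n}$) cancel pair by pair, leaving
\[
\lim_{n\to\infty}G_{n,h}(k,i)=\frac{1}{(i-1)!}\,\kappa^{1-k}\mu^{-ih}\bigl(\theta(\kappa-1)^2\bigr)^{i-1}\quad(\mu<1),
\]
and the analogous expressions in the critical and super-critical cases. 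Multiplying by $\kappa^{k-i}\binom{k}{i}$, summing over $i$, and multiplying by the limit of $b_{n,h}$ reproduces term by term the polynomial part of $\ch(h,k,\theta)$ given in Lemma \ref{lem:=distrib1}. In all three regimes everything is dominated (uniformly in $n$) because $a_n/c_n$ is bounded, so the finite sum over $i\in\{1,\dots,k\}$ can be passed to the limit without further work.

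The routine but somewhat delicate step is the bookkeeping in the sub-critical case, where the three asymptotic ingredients $\gamma_n/\gamma_{n-h}^k\to\kappa^{1-k}$, $(\gamma_n-\kappa)(\gamma_n-1)\sim\mu^n(\kappa-1)^2$ and $a_n\sim\theta\mu^{-n}$ must be combined correctly so that the leftover factor $\mu^{-ih}(\kappa-1)^{2(i-1)}/\kappa^{i-1}$ matches the constant inside the $i$-th binomial term of $\ch$; the analogous check in the super-critical case is symmetric. This is the main (and essentially the only) place where care is needed, but it is purely a matter of plugging in the already established asymptotic estimates of Section \ref{sec:geom}. Once the three cases are verified, the convergence \eqref{eq:cv-loi} follows and the proof is complete.
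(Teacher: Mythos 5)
Your proposal is correct and follows essentially the same route as the paper's own proof: reduce via \eqref{eq:cv-loi}, \eqref{eq:ph} and Lemma \ref{lem:=distrib1} to the limit of $\P_k(Z_{n-h}=a_n)/\P(Z_n=a_n)$, then apply Lemma \ref{lem:calculG} and compute $\lim b_{n,h}$ from \eqref{eq:equiv-gg} and $\lim G_{n,h}(k,i)$ from \eqref{eq:lim-g} and \eqref{eq:equiv-gg-1c0}, recovering $\ch(h,k,\theta)$. The limits you state (including the exponential factor and the sub-critical constant $\mu^{-ih}(\kappa-1)^{2(i-1)}/\kappa^{i-1}$ after absorbing $\kappa^{k-i}$) match the paper's computation exactly.
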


\begin{proof}
Recall the definitions of $b_{n,h}$ in \reff{eq:def-bnh} and of
$G_{n,h}$ in \reff{eq:Gnh}. 
According to Lemma \ref{lem:calculG}, we have for $n\geq h\geq 1$ and
$k\in \Np$:
\[
\frac{\P_k(Z_{n-h}=a_n)}{\P(Z_n=a_n)}=b_{n,h} \, \sum_{i=1}^k \binom{k}{i} \,
\kappa^{k-i}\,  G_{n,h}(k,i). 
\]

According     to      Definition     \reff{eq:def-bnh},      we     have
$b_{n,h}=\exp{(-a_n  \log(\gamma_{n-h}/\gamma_{n}))}$.  We  deduce  from
\reff{eq:equiv-gg} and the hypothesis on $(a_n, n\in \Np)$ that
\[
 \lim_{n\rightarrow\infty
} -\log(b_{n,h})=
\begin{cases}
\theta (\mu^{-h}-1) (\kappa -1)/\kappa& \text{if $\mu<1$},\\
\theta(\gamma-1)h    & \text{if $\mu=1$},\\
\theta (\mu^{h}-1) (1-\kappa )& \text{if $\mu>1$}.\\
\end{cases}
\]
We deduce from \reff{eq:Gnh}, \reff{eq:lim-g} and
\reff{eq:equiv-gg-1c0}, that for $h\in \Np$, $k\geq i\geq 1$: 
\[
\lim_{n\rightarrow\infty
} (i-1)! G_{n,h}(k,i)=
\begin{cases}
 \left(\theta \mu^{-h} (\kappa-1)^2 \right)^{i-1} \mu^{-h} \kappa^{1-k} & \text{if $\mu<1$},\\
 \left(\theta(\gamma-1)^2\right)^{i-1}     & \text{if $\mu=1$},\\
 \left(\theta \mu^{h} (1-\kappa)^2 \right)^{i-1} \mu^{h}  & \text{if $\mu>1$}.
\end{cases}
\]
Using  definition of $\ch$ in Lemma \ref{lem:=distrib1}, we obtain that:
\[
   \lim_{n\rightarrow\infty}
\frac{\P_k(Z_{n-h}=a_n)}{\P(Z_n=a_n)}
= \ch(h,k,\theta).
\]
Then use the  characterization of the convergence  in $\Tf$, \reff{eq:ph}
and Lemma  \ref{lem:=distrib1} to conclude.
\end{proof}

\section{The condensation regime or the very fat case}\label{sec:vf}

\subsection{An infinite geometric tree}
Recall $\gamma_n$ defined in \reff{eq:hqg-n}. 
For $n\in \Np$, we define the probability $\tilde p_n=(\tilde p_n(k),
k\in \Nz)$ by:
\[
\tilde p_n(k)=\frac{\gamma_{n+1}^k}{\gamma_n} p(k).
\]
Thanks           to            \reff{eq:g-g},           we           get
$\sum_{k\in \Nz} \tilde p_n(k)=\ff(\gamma_{n+1}) \gamma_n^{-1}=1$, so
that $\tilde  p$ is indeed a  probability distribution on $\Nz$.  For
$n=0$, we  set $\tilde  p_0$ the Dirac  mass at $+\infty  $, which  is a
degenerate probability measure on $\bar \Nz$.

We define $\tau^\infty$ as an inhomogeneous GW tree with reproduction
distribution $\tilde p_h$ at generation $h\in \Nz$. In particular the
root has an infinite number of children, whereas all the other individuals
have a finite number of children. More precisely, for all $h\in
\Np$, $k_0\in \Np$ and $\bt \in \T_{k_0}^{(h)}$, we have:
\begin{equation}
   \label{eq:def-t-t}
\P(r_{h, k_0}(\tau^\infty)=\bt)= \prod_{u\in r_{h-1}(\bt)^*}  \tilde
p_{|u|}(k_u(\bt)) ,
\end{equation}
where we recall that $\bt^*=\bt\setminus\{\emptyset\}$. Remark that
a.s. $\tau^\infty\in\Tf^*$.

We give a representation of the distribution of $\tau^\infty$ as the 
distribution of $\tau$ with a martingale weight. 

\begin{lem}
   \label{lem:t-t=mart-t}
Let     $\eta\in    (0,1]$     and     $q\in     (0,    1)$.    For all $h\in
\Np$, $k_0\in \Np$ and $F$ a non-negative function on $\T_\infty$, we have:
\[
\E\left[F(r_{h, k_0}(\tau^\infty))\right]= \frac{\E\left[F(r_h(\tau)) \,
    M_h\ind_{\{k_\emptyset(\tau)=k_0\}} \right]}{\P(k_\emptyset(\tau)=k_0)} ,
\]
where $(M_h,h\in \Np)$ is the martingale defined by \reff{eq:def-m}.
Equivalently, for all $h\in
\Np$, $k_0\in \Np$ and $\bt \in \T_{k_0}^{(h)}$, we have with $k=z_h(\bt)$:
\begin{equation}
   \label{eq:t-t=mart-t}
\P\left(r_{h, k_0}(\tau^\infty)=\bt\right)= 
\frac{1-q}{\eta q }\, \gamma_h ^{k}\,\,  \P\left(r_{h}( \tau)=\bt\right).
\end{equation}
\end{lem}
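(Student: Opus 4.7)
The plan is to verify the explicit identity \reff{eq:t-t=mart-t} directly from the defining formula \reff{eq:def-t-t} of $\tau^\infty$ by a telescoping product argument, and then to deduce the $F$-formulation as an immediate reorganization of the sum over trees.

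I would fix $\bt \in \T^{(h)}_{k_0}$ with $k = z_h(\bt)$ and substitute $\tilde p_n(j) = \gamma_{n+1}^j p(j)/\gamma_n$ into \reff{eq:def-t-t}. This splits $\P(r_{h,k_0}(\tau^\infty)=\bt)$ as the product of two factors over $u \in r_{h-1}(\bt)^*$: a ``$p$-part'' $\prod p(k_u(\bt))$ and a ``$\gamma$-part'' $\prod \gamma_{|u|+1}^{k_u(\bt)}/\gamma_{|u|}$. The $p$-part equals $\P(r_h(\tau)=\bt)/p(k_0)$, since the product over the full $r_{h-1}(\bt)$ equals $\P(r_h(\tau)=\bt)$ by the definition of GW trees and the only missing vertex is the root with $k_\emptyset(\bt)=k_0$.

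For the $\gamma$-part, I regroup contributions by generation: at level $\ell \in \{1,\ldots,h-1\}$ the factors collect to $\gamma_{\ell+1}^{z_{\ell+1}(\bt)}/\gamma_\ell^{z_\ell(\bt)}$ (using $\sum_{|u|=\ell} k_u(\bt) = z_{\ell+1}(\bt)$), and the product over $\ell$ telescopes to $\gamma_h^{z_h(\bt)}/\gamma_1^{z_1(\bt)} = \gamma_h^k/\gamma_1^{k_0}$. Combining and using $\gamma_1 = 1/(1-q)$ together with $p(k_0) = \eta q(1-q)^{k_0-1}$ to evaluate $1/(\gamma_1^{k_0}p(k_0)) = (1-q)/(\eta q)$ then yields \reff{eq:t-t=mart-t}.

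For the $F$-formulation, I observe that for $\bt \in \T^{(h)}_{k_0}$ the value of $M_h$ is deterministic on $\{r_h(\tau)=\bt\}$, equal to $\gamma_1^{-k_0}\gamma_h^k$; since $\P(k_\emptyset(\tau)=k_0)=p(k_0)$, summing \reff{eq:t-t=mart-t} against $F$ rearranges into the claimed identity. To cover $F$ also on trees of height $<h$, the very same telescoping gives the analogous relation with $z_h(\bt)=0$, which agrees with the factor $\gamma_h^{Z_h}=1$ that $M_h$ takes on $\{Z_h=0\}$. The only real ``obstacle'' is notational: tracking that $r_{h-1}(\bt)^*$ excludes the root (which is exactly what produces the $1/p(k_0)$ that combines with $\gamma_1^{-k_0}$ into the clean prefactor) and performing the level-by-level telescoping carefully. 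There is no deeper difficulty.
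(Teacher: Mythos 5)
Your proof is correct and follows essentially the same route as the paper's: both verify \reff{eq:t-t=mart-t} by substituting the definition of $\tilde p_n$ into \reff{eq:def-t-t}, using $\sum_{|u|=\ell}k_u(\bt)=z_{\ell+1}(\bt)$ to telescope the $\gamma$-factors generation by generation, and absorbing $\gamma_1^{-k_0}/p(k_0)$ into the prefactor $(1-q)/(\eta q)$; the paper merely runs the computation from right to left. Your explicit treatment of the $F$-formulation on trees of height $<h$ (where $z_h(\bt)=0$ matches $\gamma_h^{Z_h}=1$) is a small point the paper leaves implicit, but it is the same argument.
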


\begin{proof}
   Let $h\in
\Np$, $k_0\in \Np$ and $\bt \in \T_{k_0}^{(h)}$.  Set $k=z_h(\bt)$. We have:
\begin{align*}
\frac{1-q}{\eta q }\, \gamma_h ^{k}\, \P\left(r_{h}(
  \tau)=\bt\right)
&= \frac{1-q}{\eta q }  \, \left[\prod _{u\in \bt,\,
  |u|=h-1} \gamma_h^{k_u(\bt) } \right] \,   \left[\prod_{u\in
  r_{h-1}(\bt)} p(k_u(\bt))\right] \\
&= \frac{1-q}{\eta q } \, \gamma_1^{k_0}  \, \left[\prod_{u\in
  r_{h-1}(\bt)^*} \gamma_{|u|}^{-1}\, 
  \gamma_{|u|+1}^{k_u(\bt)} \right]\, \left[\prod_{u\in r_{h-1}(\bt)} p(k_u(\bt))\right]\\
&=\frac{1-q}{\eta q } \, \gamma_1^{k_0}  p(k_0) \left[\prod_{u\in
  r_{h-1}(\bt)^*} \tilde p_{|u|}(k_u(\bt))\right]\\
&=\P(r_{h, k_0}(\tau^\infty)=\bt),
\end{align*}
where we used that $\sum_{u\in \bt, \, |u|=\ell} k_u(\bt)=\sum_{u\in
  \bt, \, |u|=\ell+1} 1$ for the second equality and
the definition of $p(k_0)$ and
$\gamma_1=\gamma$ as well as \reff{eq:def-t-t} for the last one. To
conclude, notice also that thanks to the definition of $p(k_0)$ and
$\gamma_1=\gamma$ as well as \reff{eq:def-m}, we have on $\{k_\emptyset(\tau)=k_0\}$:
\[
\frac{1-q}{\eta q }\, \gamma_h ^{z_h(\tau)} = \frac{M_h}{p(k_0)}\cdot
\] 
\end{proof}

We give an alternative description of $\tau^\infty$ as the skeleton of a
two-type GW tree. We set for $n\in \Nz$:
\[
\nu_n=1- \frac{\gamma_{n+1} -1}{\gamma_1 - 1}
= 
\begin{cases}
\mu (1 -\mu^n) \, (1 -\mu^{n+1})^{-1}  & \text{ if $\mu\neq 1$},\\ 
n (n+1)^{-1} & \text{ if $\mu=1$}.\\
\end{cases}
\]
We have $\nu_n\in [0, 1)$. It is easy to check (using the first
expression  of $\nu_{n-1}$ for the first equality and the second
expression for $\nu_{n-1}$ and $\nu_n$ for the second equality) 
that for all $n\in \Np$:
\begin{equation}
   \label{eq:q-nu=g}
\frac{1-q\nu_{n-1}}{1-q}= \gamma_n \quad\text{and}\quad 
\inv{\mu}  (1- \nu_{n-1})\frac{\nu_n}{1-\nu_n}=1.
\end{equation}

We consider  a two-type GW tree  $\hat \tau^\infty $ where  the vertices
are either of  type $\rs$ (for survivor) or of  type $\re$ (for extinction).
We define  $\ske(\hat \tau^\infty )$ as  the tree  $\hat \tau^\infty $  when one
forgets the types of the vertices of $\hat \tau ^\infty $.  We denote by
$\cs_h=\{u\in \ske(\hat \tau^\infty );\, |u|=h \text{ and $u$ is of type
  $\rs$ in  $\hat \tau^\infty $}\}$ the  set of vertices of  $\hat \tau$
with type $\rs$ at level $h\in \Nz$. The random tree $\hat \tau^\infty $
is defined as follows:
\begin{itemize}
\item The number of offspring of a vertex depends, conditionally on the
  vertices of lower or same height, only on its own type (branching property).
   \item The root is of type $\rs$ (i.e. $\cs_0=\{\emptyset\}$).
   \item A vertex of type $\re$ produces only vertices of type $\re$ with
     offspring distribution $\fp$ defined by \reff{eq:def-fp}.
   \item A  vertex $u\in \hat  \tau^\infty $ at level  $h$ of type  $\rs$ produces
     $\kappa^\rs(u)$ vertices  of type $\rs$ with  probability distribution
     $\cg(1,  \nu_h)$   (with  the   convention  that  if   $h=0$,  then
     $\kappa^\rs(\emptyset)=+\infty $) and $\kappa^\re(u)$ vertices of type
     $\re$     such      that     the     type     of      the     vertices
     $(ui, 1\leq  i\leq \kappa^\rs(u)+\kappa^\re(u))$  is a  sequence of
     heads (type $\rs$)  and tails (type $\re$) where  the probability to
     get an  head is  $q\vee \eta$ and a  tail is $1-q\vee \eta$,  stopped just  before the
     $(\kappa^\rs(u)   +1)$-th   head.    Equivalently,  for   $|u|\geq   1$,
     conditionally  on  $\kappa^\rs(u)=s_u\geq  1$,  the  vertex  $u$  has
     $\kappa^\re(u)$     vertices    of     type     $\re$    such     that
     $k_u(\ske(\hat \tau^\infty ))=\kappa^\rs(u)+\kappa    ^\re(u)$    has
     distribution $\fp_{[s_u]}$,  defined in \reff{eq:biased-fp},  and the
     $s_u$  individuals of  type $\rs$  are chosen  uniformly at  random
     among the  $k_u(\ske(\hat \tau^\infty ))$ children.  More  precisely, we
     have for $k_0\in \Np$ and $S_1\subset \{1, \ldots, k_0\}$:
\[
\P\left(\cs_1 \cap \{1, \ldots, k_0\} =S_1\right) = (q\vee \eta)^{\sharp S_1}
(1-(q\vee\eta))^{k_0 -\sharp S_1},
\]
and for $h\geq 2$, $k\in \Np$,  $u\in \cu$ with $|u|=h$, $s_u\in
\{1, \ldots, k\}$,  and $A\subset \{1, 
\ldots, k \}$ such that $\sharp A=s_u$:
\begin{multline*}
\P\left(\kappa^\rs(u)+\kappa^\re(u)=k, \, \cs_{h+1} \cap\{u1,
    \ldots, uk\}=uA \,  |\,  r_h(\ske(\hat \tau^\infty )),  \,  \cs_h, \,u\in
\cs_{h}\right) 
\\
= \nu_h (1-\nu_h)^{s_u -1} \, (q\vee\eta)^{s_u+1} (1-(q\vee\eta))^{k -s_u}.
\end{multline*}
\end{itemize}
By construction  individuals of type $\rs$  have a progeny which  does not
suffer extinction whereas individuals of type
$\re$ have  a.s.  a finite  progeny. 

We stress out that $\hat \tau^\infty $, truncated  at level $h$ and when
considering only the first $k_0$ children of the root,  can   be  recover  from
$r_{h,k_0}(\ske(\hat \tau^\infty ))$ and $\cs_h$ as  all the ancestors of a vertex
of type $\rs$  is also of a type  $\rs$ and a vertex of type  $\rs$ has at
least one children of type $\rs$.

We have the following   result. 

\begin{lem}
   \label{lem:=distrib2}
   Let $\eta\in (0,1]$ and $q\in (0,  1)$. We have that $\tau^\infty$ is
   distributed as $\ske(\hat \tau^\infty )$.
\end{lem}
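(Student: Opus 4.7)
By Lemma \ref{lem:t-t=mart-t}, formula \reff{eq:t-t=mart-t} characterizes the distribution of $\tau^\infty$: for all $h,k_0\in \Np$ and $\bt\in \T^{(h)}_{k_0}$ with $k = z_h(\bt)$,
\[
\P\!\left(r_{h,k_0}(\tau^\infty) = \bt\right) = \frac{1-q}{\eta q}\, \gamma_h^k\, \P(r_h(\tau) = \bt).
\]
Since $\ske(\hat\tau^\infty)\in \Tf^*$ almost surely has infinite height (the type-$\rs$ backbone is infinite by construction, as every $\rs$ vertex has at least one $\rs$ child) and infinite root out-degree, it is enough to show that $\ske(\hat\tau^\infty)$ satisfies the same identity.

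I would follow the blueprint of the proof of Lemma \ref{lem:=distrib1} and decompose
\[
\P\!\left(r_{h,k_0}(\ske(\hat\tau^\infty)) = \bt\right) = \sum_{S_h} \cc_{S_h},
\]
where the sum runs over non-empty subsets $S_h\subseteq\{u\in\bt;\, |u|=h\}$ and $\cc_{S_h} = \P(r_{h,k_0}(\ske(\hat\tau^\infty))=\bt,\,\cs_h=S_h)$. Set $S_\ell=\anc(S_h)\cap\{u;|u|=\ell\}$ for $\ell<h$ (so $S_0=\{\emptyset\}$), $\sigma_\ell=\sharp S_\ell$, and $s_u=\sharp\{i:ui\in S_{\ell+1}\}$ for $u\in S_\ell$. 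Using the construction of $\hat\tau^\infty$, $\cc_{S_h}$ factorizes into three types of contributions: the root factor $(q\vee\eta)^{\sigma_1}(1-(q\vee\eta))^{k_0-\sigma_1}$; for each survivor $u\in S_\ell$ with $1\le\ell\le h-1$, the factor $\nu_\ell(1-\nu_\ell)^{s_u-1}(q\vee\eta)^{s_u+1}(1-(q\vee\eta))^{k_u(\bt)-s_u}$; and for each non-backbone vertex $u\in r_{h-1}(\bt)^*\setminus\anc(S_h)$, the factor $\fp(k_u(\bt))$.

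The next step is regrouping: using $\sum_{u\in S_\ell}s_u=\sigma_{\ell+1}$, the products over $u\in S_\ell$ collapse to powers depending only on $\sigma_\ell,\sigma_{\ell+1}$ and $\sum_{u\in S_\ell}k_u(\bt)$. Substituting $\fp(k)=\fc^{k-1}p(k)$ and absorbing the $(q\vee\eta)$-powers into $p$-factors via $p(k)=\eta q(1-q)^{k-1}$ for $k\ge 1$ (treating $\mu<1$ and $\mu>1$ separately, since $q\vee\eta=q$ in the former and $\eta$ in the latter), the entire product should reorganize as $\P(r_h(\tau)=\bt)$ multiplied by a ``skeleton factor'' that depends on $S_h$ only through the profile $(\sigma_\ell)_{0\le\ell\le h}$. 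The sum $\sum_{S_h}$ is then evaluated level by level: for each $\ell$, summing over configurations amounts to independently choosing $s_u$ and an $s_u$-subset $A_u\subseteq\{1,\ldots,k_u(\bt)\}$ for each $u\in S_\ell$, handled cleanly by the binomial theorem. The identities \reff{eq:q-nu=g}, namely $(1-q\nu_{n-1})/(1-q)=\gamma_n$ and $\nu_n(1-\nu_{n-1})=\mu(1-\nu_n)$, should then make the inter-level factors telescope, leaving only boundary terms at levels $0$ and $h$ that combine into $\frac{1-q}{\eta q}\gamma_h^k$.

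The main obstacle is the telescoping itself: the skeleton factor takes superficially different forms in the sub- and super-critical cases because $q\vee\eta$ takes different values and $\fp$ has different parameters. One may either carry out the two computations separately, or exploit that in the super-critical case $\fp=\cg(q,\eta)$ is obtained from $p=\cg(\eta,q)$ by swapping $\eta\leftrightarrow q$, which hints at a unified formulation. Once the correct factorization is identified, collapsing the level sums via \reff{eq:q-nu=g} is routine, if bookkeeping-intensive.
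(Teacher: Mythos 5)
Your overall strategy is the same as the paper's: reduce the claim to verifying the identity \reff{eq:t-t=mart-t} for $\ske(\hat\tau^\infty)$, decompose according to the set $S_h$ of type-$\rs$ vertices at level $h$ inside $\bt$, factorize $\cc_{S_h}$ into root, survivor and extinction contributions, and collapse everything via \reff{eq:q-nu=g}. However, there is one genuine error: you restrict the sum to \emph{non-empty} subsets $S_h$. Unlike the Kesten and Poisson trees, the root of $\hat\tau^\infty$ has infinitely many children of type $\rs$, and the restriction $r_{h,k_0}$ only retains the first $k_0$ of them; each of these is of type $\rs$ independently with probability $q\vee\eta<1$, so with positive probability none of the first $k_0$ subtrees meets the backbone and $\cs_h\cap\{u\in\bt;\,|u|=h\}=\emptyset$. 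The paper explicitly includes this case, computing $\cc_\emptyset=(1-q)^{k_0}\prod_{u\in r_{h-1}(\bt)^*}p(k_u(\bt))=\frac{1-q}{\eta q}\P(r_h(\tau)=\bt)$, which is exactly the $i=0$ term of the binomial sum
\[
\sum_{i=0}^{k}\binom{k}{i}\Bigl(\tfrac{q}{1-q}(1-\nu_{h-1})\Bigr)^{i}
=\Bigl(\tfrac{1-q\nu_{h-1}}{1-q}\Bigr)^{k}=\gamma_h^{k}.
\]
Dropping it yields $\gamma_h^{k}-1$ instead of $\gamma_h^{k}$, so the identity you are trying to verify would fail. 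Once the empty set is reinstated, the rest of your outline (the per-level factorization, the use of $\sum_{u\in S_\ell}s_u=\sharp S_{\ell+1}$, the telescoping via \reff{eq:q-nu=g}, and the reduction of the supercritical case to the subcritical one by swapping $\eta$ and $q$) matches the paper's argument. One small additional remark: the paper's telescoping shows that $\cc_{S_h}$ depends on $S_h$ only through $\sharp S_h$ (not merely through the full profile $(\sigma_\ell)$), which is what makes the final summation a single binomial sum over $i=\sharp S_h$ rather than a level-by-level evaluation; your level-by-level variant should also work but is not what the paper does.
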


\begin{proof}
We first suppose that $\eta\le q$. In that case, $\mu\le 1$ and we have $\fp=p$ and $q\vee \eta=q$.

Let $h\in \Np$, $k_0\in \Np$, $\bt\in \T_{k_0}^{(h)}$ and $S_h\subset
\{u\in \bt; \, |u|=h\}$ which might be empty. 
In order to shorten the
notations, we set $\ca=\anc(S_h)$ which is a tree if $S_h$ is
non-empty. 
  For $u\in \ca$,  we set
$s_u=\sharp\{ i\in \Nz;\,  ui\in \ca\cup S_h\}$ the number of children of
$u$ which have at least one descendant in $S_h$. 
  We  set,  for
$\ell\in \{0,  \ldots, h-1\}$, $S_\ell=\{u\in \ca,  \, |u|=\ell\}$ the
vertices at  level $\ell$  which have  at least one  descendant in
$S_h$. Notice that $\sum_{u\in S_\ell} s_u=\sharp S_{\ell+1}$. 
Set $k=z_h(\bt)$.   We compute $\cc_{S_h}=\P(r_{h,
  k_0} (\ske(\hat \tau^\infty ))=\bt  ,\, \cs_h=S_h)$. 
If $S_h$ is non-empty, we have:
\begin{align*}
   \cc_{S_h}
&= \left[\prod_{u\in r_{h-1}(\bt),\, u\not\in \ca} p(k_u(\bt)) \right]
q^{\sharp S_1} (1-q)^{k_0- \sharp S_1} \prod_{u\in \ca^*} \nu_{|u|} (1-\nu_{|u|})^{s_u-1}
q^{s_u+1} (1-q)^{k_u(\bt) - s_u} \\
&= \left[\prod_{u\in r_{h-1}(\bt)^*} p(k_u(\bt)) \right]
q^{\sharp S_1} (1-q)^{k_0- \sharp S_1}   \prod_{u\in \ca^*} \frac{\nu_{|u|}}{1-
  \nu_{|u|}} \frac{1-q}{\eta} 
  \left(\frac{q}{1-q}(1- \nu_{|u|})\right) ^{ s_u} \\
&=\frac{1-q}{\eta q}\P(r_h(\tau)=\bt) \left(\frac{q}{1-q}\right)^{\sharp S_1}\,\,
\prod_{\ell=1}^{h-1}  \left( \frac{\nu_{\ell}}{1- \nu_{\ell}}
  \frac{1-q}{\eta}\right) ^{ \sharp S_\ell}  
  \left(\frac{q}{1-q}(1- \nu_{\ell})\right) ^{ \sharp S_{\ell+1}} \\
&=\frac{1-q}{\eta q}\P(r_h(\tau)=\bt)
 \left( \frac{\nu_{1}}{1- \nu_{1}}
  \frac{q}{\eta}\right) ^{ \sharp S_1}  \!
  \left(\frac{q}{1-q}(1- \nu_{h-1})\right) ^{ \sharp S_h}
\, \prod_{\ell=2}^{h-1}  \left( \frac{\nu_{\ell}}{1- \nu_{\ell}}
  \frac{q}{\eta} (1-\nu_{\ell-1}) \right) ^{ \sharp S_{\ell}} \\
&=\frac{1-q}{\eta q}\, \P(r_h(\tau)=\bt)\, 
  \left(\frac{q}{1-q}(1- \nu_{h-1})\right) ^{ \sharp S_h},
\end{align*}
where we used for the second equality that if $u\in \ca$ and $\cs_h=S_h$,
then $k_{\ske(\hat \tau^\infty )}(u)\geq 1$;  and for the fifth the second equation from
\reff{eq:q-nu=g} as well as
$\nu_1/(1-\nu_1)=\mu=\eta/q$ (which comes also from the second equation in
\reff{eq:q-nu=g} with $n=0$). 
If $S_h$ is empty, then we have:
\[
\cc_\emptyset=(1-q)^{k_0} \prod_{u\in r_{h-1}(\bt)^*} p(k_u(\bt))
= \frac{1-q}{\eta q}\, \P(r_h(\tau)=\bt).
\]
Notice that $\cc_{S_h}$ depends on $S_h$
only trough $\sharp S_h$. 
We deduce that:
\begin{align*}
\P(r_{h,
  k_0} (\ske(\hat \tau^\infty ))=\bt)
&= \sum_{i=0}^k   \sum_{S_h\subset \{u\in \bt; \, |u|=h\}}\,
  \ind_{\{\sharp S_h=i\}}\, 
  \cc_{S_h}\\
&= \frac{1-q}{\eta q}\, \P(r_h(\tau)=\bt)\, 
\sum_{i=0}^{k} \binom{k}{i}   \left(\frac{q}{1-q}(1-
  \nu_{h-1})\right) ^{ i}\\
&=\frac{1-q}{\eta q}\,  \P(r_h(\tau)=\bt)\, 
\left(1+ \frac{q}{1-q}(1-
  \nu_{h-1})\right) ^{k}\\
&= \frac{1-q}{\eta q}\, \P(r_h(\tau)=\bt)
\left(\frac{1 -q
  \nu_{h-1}}{1-q} \right) ^{ k}\\
&= \frac{1-q}{\eta q}\, \P(r_h(\tau)=\bt)
\gamma_h ^{ k},
\end{align*}
where we used the first  equation from
\reff{eq:q-nu=g} for the last equality. 
Then we conclude using \reff{eq:t-t=mart-t} from Lemma \ref{lem:t-t=mart-t}. 
\medskip

In the case $q<\eta$, we have that $\fp$ is the $\cg(q,\eta)$ distribution. So the computations are the same, inverting the roles of $q$ and $\eta$.
\end{proof}

As in Remark \ref{rem:cvtq-t0}, we also have the convergence of the trees $\tau^\theta$ introduced in Section \ref{sec:Poisson} to the infinite geometric tree $\tau^\infty$ as $\theta\to+\infty$.
\begin{prop}
\label{prop:cvtq-tinfty}
  Let $\eta\in  (0,1]$ and $q\in  (0, 1)$.   Then we have  the following
  convergence in distribution:
\[
\tau^\theta\; \xrightarrow[\theta\rightarrow \infty ]{\textbf{(d)}}\; 
\tau^\infty.
\]
\end{prop}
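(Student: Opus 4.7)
My plan is to apply the convergence criterion \reff{eq:cv-loi*} in $\Tf^*$. The tree $\tau^\infty$ has $k_\emptyset(\tau^\infty) = +\infty$ almost surely by construction (the convention $\tilde p_0 = \delta_{+\infty}$) and infinite height almost surely (its infinite first generation rules out extinction), so it remains to show, for every $h,k_0 \in \N^*$ and every $\bt \in \T^{(h)}_{k_0}$ with $k = z_h(\bt)$, that $\P(r_{h,k_0}(\tau^\theta) = \bt)$ converges as $\theta \to \infty$ to the value $\frac{1-q}{\eta q}\gamma_h^k \P(r_h(\tau) = \bt)$ provided by Lemma \ref{lem:t-t=mart-t}.

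The strategy rests on reading Lemma \ref{lem:=distrib1} as a Radon--Nikodym statement: $\ch(h, z_h(\cdot),\theta)$ is the density of the law of $r_h(\tau^\theta)$ with respect to that of $r_h(\tau)$. Hence
\[
\P\bigl(r_{h,k_0}(\tau^\theta) = \bt\bigr) = \P\bigl(r_{h,k_0}(\tau) = \bt\bigr)\,\E\bigl[f(z_h(\tau^\theta))\bigr],\qquad f(k') := \frac{\P(z_h(\tau) = k' \mid r_{h,k_0}(\tau) = \bt)}{\P(Z_h = k')}.
\]
Conditionally on $r_{h,k_0}(\tau) = \bt$, the excess $M = k_\emptyset(\tau) - k_0$ has geometric distribution $q(1-q)^m$ (since $p(k_0+m) = p(k_0)(1-q)^m$) and the $M$ extra subtrees of the root are independent copies of $\tau$, so $z_h(\tau) - k$ is distributed as $Z_{h-1}^{(M)}$, whose generating function is $q/(1-(1-q)\ff_{h-1}(s))$. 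Its only positive-real singularity is a simple pole at $s = \gamma_h$: indeed, $\ff_{h-1}(\gamma_h) = \gamma$ by \reff{eq:g-g} and $(1-q)\gamma = 1$ by \reff{eq:h+q}.

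A standard singularity/partial-fraction argument then yields $\P(Z_{h-1}^{(M)} = L) \sim q\gamma_h^{-L-1}/[(1-q)\ff_{h-1}'(\gamma_h)]$ as $L \to \infty$; combined with the exact geometric tail $\P(Z_h = k') = (\gamma_h-\kappa)(\gamma_h-1)\gamma_h^{-k'-1}$ and the explicit $\ff_{h-1}'(\gamma_h) = (\gamma_{h-1}-1)(\gamma_{h-1}-\kappa)/(\gamma_{h-1}-\gamma_h)^2$, this gives $\lim_{k' \to \infty} f(k') = \gamma_h^k/(\gamma-\kappa) = (1-q)\gamma_h^k/\eta$, the simplification hinging on the rational identity $(\gamma_{h-1}-\kappa)(\gamma_h-1) = (\gamma-1)(\gamma_{h-1}-\gamma_h)$, which follows from \reff{eq:homo} and \reff{eq:g-c,g-1} when $\mu \neq 1$ and by direct substitution of $\gamma_n = 1+(\gamma-1)/n$ when $\mu=1$. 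Since $f$ is bounded on $\N$ (the numerator and denominator share the decay rate $\gamma_h^{-k'}$), and $z_h(\tau^\theta) \geq \sharp\cs_h$ with $\sharp\cs_h - 1$ Poisson of mean $\theta \sum_{\ell=0}^{h-1}\zeta_\ell \to +\infty$, we have $z_h(\tau^\theta) \to +\infty$ in probability; bounded convergence delivers $\E[f(z_h(\tau^\theta))] \to (1-q)\gamma_h^k/\eta$, and multiplying by $\P(r_{h,k_0}(\tau) = \bt) = \P(r_h(\tau) = \bt)/q$ (from summing the geometric weights $p(k_0+m)$) produces the target value. The main obstacle is spotting and verifying the rational identity between $\gamma,\gamma_{h-1},\gamma_h,\kappa$ above: it is elementary once guessed, but requires a separate treatment in the critical case where \reff{eq:g-c,g-1} degenerates.
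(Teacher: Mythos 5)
Your proof is correct, and it takes a genuinely different route from the paper's for the key analytic step. The paper also reduces the problem to the law of the generation size $z_h$: it writes $\P(r_{h,k_0}(\tau^\theta)=\bt)=\sum_{\bt'}\P(r_h(\tau^\theta)=\bt*\bt')$ over trees $\bt'$ grafted next to $\bt$ at the root, uses the factorization $\P(r_h(\tau)=\bt*\bt')=\frac{1-q}{\eta q}\P(r_h(\tau)=\bt)\P(r_h(\tau)=\bt')$ together with Lemma \ref{lem:=distrib1}, and then evaluates the resulting double sum over $(k',i)$ explicitly via \reff{eq:serie}, recognising an exponential series whose $\theta\to\infty$ asymptotics give $\mu^{-h}\gamma_h^k$ after cancellation with the prefactor $\expp{-\theta(\mu^h-1)(1-\kappa)}$; only the super-critical case is written out. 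You instead perform no asymptotics in $\theta$ at all: reading Lemma \ref{lem:=distrib1} as a change of measure, you reduce the statement to $\lim_{k'\to\infty}f(k')$ for an explicit likelihood ratio $f$ of two geometric-type laws, and convert $\theta\to\infty$ into $z_h(\tau^\theta)\to+\infty$ in probability via the Poisson immigration of the skeleton. Your computation of $\lim f(k')$ is sound: the change-of-measure identity is legitimate because the event $\{r_{h,k_0}(\cdot)=\bt\}$ forces $z_h\geq k\geq 1$ and hence height $h$; the pole of $q/(1-(1-q)\ff_{h-1}(s))$ at $\gamma_h$ is simple (the function is itself a homography, so the geometric tail is even exact); the formula $\ff_{h-1}'(\gamma_h)=(\gamma_{h-1}-\kappa)(\gamma_{h-1}-1)/(\gamma_{h-1}-\gamma_h)^2$ and the identity $(\gamma_{h-1}-\kappa)(\gamma_h-1)=(\gamma-1)(\gamma_{h-1}-\gamma_h)$ (and its companion with $1$ and $\kappa$ exchanged, which you also need and which follows from \reff{eq:g-c,g-1}) check out in all three regimes; and $\P(r_{h,k_0}(\tau)=\bt)=\P(r_h(\tau)=\bt)/q$ recovers exactly \reff{eq:t-t=mart-t}. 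What your approach buys is a uniform treatment of the sub-critical, critical and super-critical cases and a transparent probabilistic mechanism (the conditional law of the truncated tree given a large $h$-th generation stabilises); what the paper's buys is that it stays within elementary manipulations of the explicit formula for $\ch$ and needs neither singularity analysis nor the bounded-convergence step.
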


\begin{proof}
We only deal with the supercritical case, the subcritical and critical cases can be handled in a similar way.

For $\bt,\bt'\in \Tf$ such that $k_\emptyset(\bt)<\infty $, let us denote by $\bt*\bt'$ the tree obtained by
grafting $\bt$ and $\bt'$ on the same root i.e.: 
\[
\bt*\bt'=\bt\cup \{(u_1+k_\emptyset(\bt),u_2,\ldots,u_n),\ (u_1,\ldots,u_n)\in\bt'^*\},
\]
with the convention $\bt*\bt'=\bt$ if $\bt'=\{\emptyset\}$. 

We denote  by $\Tf^{(\le h)}$ the  subset of $\Tf$ of  trees with height
less    than   or    equal   to    $h$.    Let    $h,k_0>0$   and    let
$\bt\in \T_{k_0}^{(h)}$. Then using Lemma \ref{lem:=distrib1} with $k=z_h(\bt)$ and $k'=z_h(\bt')$, we have:
\begin{align*}
\P & (r_{h,k_0}(\tau^\theta)=\bt)\\
 & =\sum_{\bt'\in\Tf^{(\le h)}}\P(r_h(\tau^\theta)=\bt*\bt')\\
& =\sum _{\bt'\in\Tf^{(\le
  h)}}\mu^h\expp{-\theta(\mu^h-1)(1-\kappa)}
  \sum_{i=1}^{k+k'}\binom{k+k'}{i}\kappa^{k+k'-i} 
  \frac{(\theta\mu^h(1-\kappa)^2)^{i-1}}{(i-1)!}\P(r_h(\tau)=\bt*\bt').  
\end{align*}
Let us remark that, if $\bt'\ne \{\emptyset\}$, then
\begin{align*}
\P(r_h(\tau)=\bt*\bt') & =\frac{\P(r_h(\tau)=\bt)}{p(k_\emptyset(\bt))}\frac{\P(r_h(\tau)=\bt')}{p(k_\emptyset(\bt'))}p(k_\emptyset(\bt)+k_\emptyset(\bt'))\\
& =\frac{1-q}{\eta q}\P(r_h(\tau)=\bt)\P(r_h(\tau)=\bt').
\end{align*}
Since $\P(r_h(\tau^\theta)=\bt)$ converges to 0 as $\theta$ increases to
infinity, we deduce that for $\theta\to+\infty$:
\[
\P (r_{h,k_0}(\tau^\theta)=\bt)
 =\frac{1-q}{\eta
   q}\mu^h\P(r_h(\tau)=\bt)\,\expp{-\theta(\mu^h-1)(1-\kappa)}\, A_1 +o(1),
\]
with 
\[
A_1= \sum _{\bt'\in\Tf^{(\le
    h)}\setminus\{\emptyset\}}\,\,
\sum_{i=1}^{k+k'}\binom{k+k'}{i}\kappa^{k+k'-i}
\frac{(\theta\mu^h(1-\kappa)^2)^{i-1}}{(i-1)!}\P(r_h(\tau)=\bt').   
\]
We have, using for the third equality that $Z_h$ has distribution
$\cg[\kappa, \gamma_h]$, that:
\begin{align*}
A_1 & =\sum_{k'=0}^{+\infty}\,
    \sum_{i=1}^{k+k'}\binom{k+k'}{i}\kappa^{k+k'-i}\frac{(\theta\mu^h(1-\kappa)^2)^{i-1}}{(i-1)!}\sum_{\{\bt'\in\Tf^{(\le
    h)},\, z_h(\bt')=k'\}}\P(r_h(\tau)=\bt')\\ 
&  =\sum_{k'=0}^{+\infty}\,\sum_{i=1}^{k+k'}\binom{k+k'}{i}
  \kappa^{k+k'-i}\frac{(\theta\mu^h(1-\kappa)^2)^{i-1}}{(i-1)!}\P(Z_h=k')\\  
& =\sum_{k'=0}^{+\infty}\,
  \sum_{i=1}^{k+k'}\binom{k+k'}{i}
  \kappa^{k+k'-i}\frac{(\theta\mu^h(1-\kappa)^2)^{i-1}}{(i-1)!} 
  \left(1-\frac{1}{\gamma_h}\right)\left(1-\frac{\kappa}{\gamma_h}\right)
  \frac{1}{\gamma_h^{k'-1}}\\
&=
  \left(1-\frac{1}{\gamma_h}\right)\left(1-\frac{\kappa}{\gamma_h}\right)
  (A_2+A_3),
\end{align*}
where 
\[
A_2=\sum_{i=k+1}^{+\infty}\left(\sum_{k'=i-k}^{+\infty}
  \binom{k+k'}{i}\left(\frac{\kappa}{\gamma_h}\right)^{k'-1}\right)  
\frac{(\theta\mu^h(1-\kappa)^2)^{i-1}}{(i-1)!}\kappa^{k-i+1}
\]
and 
\[
A_3=\sum_{i=1}^k\left(\sum_{k'=0}^{+\infty}
  \binom{k+k'}{i}\left(\frac{\kappa}{\gamma_h}\right)^{k'-1}\right)   
\frac{(\theta\mu^h(1-\kappa)^2)^{i-1}}{(i-1)!}\kappa^{k-i+1}. 
\]
Using \reff{eq:serie} and $\kappa/\gamma_h<1$, we get 
$\lim_{\theta\rightarrow+\infty   }  \expp{-\theta(\mu^h-1)(1-\kappa)}\,
A_3=0$. Using \reff{eq:serie}, we also have:
\begin{align*}
A_2   
&=\sum_{i=k+1}^{+\infty}\frac{1}{\left(1-\frac{\kappa}{\gamma_h}\right)^{i+1}}
  \left(\frac{\kappa}{\gamma_h}\right)^{i-k-1}\, 
\frac{(\theta\mu^h(1-\kappa)^2)^{i-1}}{(i-1)!}\kappa^{k-i+1}\\
&= \frac{\gamma_h^{k+2}}{(\gamma_h-\kappa)^2}
  \expp{\frac{(\theta\mu^h(1-\kappa)^2)}{\gamma_h-\kappa}} + O(\theta^k). 
\end{align*}

Then, as $(\gamma_h-1)/(\gamma_h-\kappa)=\mu^{-h}$  and
$(1-\kappa)/(\gamma_h-\kappa)=1-\mu^{-h}$, we get that:
\[
\lim_{\theta\rightarrow+\infty } \expp{-\theta(\mu^h-1)(1-\kappa)}\,
A_1=
\lim_{\theta\rightarrow+\infty } \expp{-\theta(\mu^h-1)(1-\kappa)}\,
\left(1-\frac{1}{\gamma_h}\right)\left(1-\frac{\kappa}{\gamma_h}\right) A_2= \mu^{-h} \gamma_h^k.
\]
 We deduce that:
\[
\lim_{\theta\to+\infty}\P(r_{h,k_0}(\tau^\theta)=\bt)=\frac{1-q}{\eta
  q}\, \gamma_h^k\, \P(r_h(\tau)=\bt).
\]
Using \reff{eq:t-t=mart-t}, this gives the result. 
\end{proof}

\subsection{Convergence of the very fat geometric GW tree}

We consider a sequence $(a_n, n\in \Np)$, with $a_n\in \Np$ and 
$\tau_n$    a  random  tree   distributed  as  the  GW   tree  $\tau$
conditionally on $\{Z_n= a_n\}$. We have the following result. 

\begin{prop}
   \label{prop:cv-vfat}
  Let    $\eta\in    (0,1]$    and    $q\in    (0,    1)$. Assume that 
$\lim_{n\rightarrow\infty  }  a_n \mu^n  =+\infty  $  if $\mu<  1$  or
$\lim_{n\rightarrow\infty }  a_n n^{-2}  =+\infty $  if $\mu=  1$ or $\lim_{n\rightarrow\infty  }  a_n \mu^{-n}  =+\infty  $  if $\mu>  1$. Then
we have the following convergence in distribution:
\[
\tau_n\; \xrightarrow[n\rightarrow \infty ]{\textbf{(d)}} \;  \tau^\infty. 
\]
\end{prop}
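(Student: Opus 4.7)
The strategy parallels that of Propositions \ref{prop:cv-nsfat} and \ref{prop:cv-fat}, with two essential modifications. First, since the target tree $\tau^\infty$ has an infinite root out-degree, the appropriate local convergence characterization is \reff{eq:cv-loi*} for $\Tf^*$-valued variables rather than \reff{eq:cv-loi}. Second, the useful prelimit identity is no longer \reff{eq:ph} (which only depends on $z_h(\bt)$ and hence cannot produce the extra $\gamma_h^k$ factor appearing in the limit), but rather the refined formula of Lemma \ref{lem:calculR}, which is precisely tailored to the regime $b_{n,h}\to 0$, $a_n\to +\infty$ we are in. Since a.s. $H(\tau^\infty)=k_\emptyset(\tau^\infty)=+\infty$, by \reff{eq:cv-loi*} it suffices to show, for every $h,k_0\in \Np$ and every $\bt\in \T_{k_0}^{(h)}$ with $k=z_h(\bt)$, that
\[
\P(r_{h,k_0}(\tau_n)=\bt) \longrightarrow \P(r_{h,k_0}(\tau^\infty)=\bt) = \frac{1-q}{\eta q}\, \gamma_h^k\, \P(r_h(\tau)=\bt),
\]
where the right-hand side comes from \reff{eq:t-t=mart-t} in Lemma \ref{lem:t-t=mart-t}. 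Combined with Lemma \ref{lem:calculR}, the whole proof reduces to showing $R^j_{n,h}(k)\to 0$ for $j=1,2$.

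The underlying mechanism is that $b_{n,h}$ decays exponentially in $y_n:=a_n/c_n$, while every other quantity entering the two remainders grows only polynomially in $y_n$. Indeed, by \reff{eq:equiv-gg-1c0}, $\alpha_n=(\gamma_{n-h}-\kappa)(\gamma_{n-h}-1)$ is of order $c_n^{-1}$ (with an $h$-dependent constant), while by \reff{eq:equiv-gg}, $-\log b_{n,h}=a_n\log(\gamma_{n-h}/\gamma_n)\sim c(h)\,y_n$ for some $c(h)>0$; consequently $a_n\alpha_n$ is of order $y_n$ and $b_{n,h}\leq \expp{-\frac{c(h)}{2}y_n}$ for $n$ large. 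Moreover, since $\gamma_{n-h}-\gamma_n$ vanishes at the same rate $c_n^{-1}$ as $\alpha_n$, the ratio $\alpha_n/(1-x_n)$ with $x_n=\gamma_n/\gamma_{n-h}$ stays bounded as $n\to\infty$. These estimates have to be checked separately in each of the three regimes $\mu<1$, $\mu=1$, $\mu>1$, but in all cases the same conclusions hold.

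Plugging these estimates into \reff{eq:Rnh1} yields $R^1_{n,h}(k)\leq C_{h,k}\,b_{n,h}\,(1+y_n^{k-1})\to 0$. For $R^2_{n,h}(k)$, Lemma \ref{lem:calculG} expresses $\P_k(Z_{n-h}=a_n)/\P(Z_n=a_n)$ as $b_{n,h}\sum_{i=1}^k\binom{k}{i}\kappa^{k-i}G_{n,h}(k,i)$, and the same asymptotics show that each $G_{n,h}(k,i)$ behaves like a constant multiple of $y_n^{i-1}$; hence $R^2_{n,h}(k)$ is again bounded by $b_{n,h}$ times a polynomial in $y_n$ and vanishes. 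The only real nuisance (rather than obstacle) is the case-by-case bookkeeping of \reff{eq:equiv-gg-1c0} and \reff{eq:equiv-gg} across the three regimes together with the tracking of the $h$- and $k$-dependent constants; once this is organized, the domination of exponential decay over polynomial growth closes the argument uniformly.
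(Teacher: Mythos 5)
Your proof is correct and follows the same overall route as the paper's: reduce, via the characterization \reff{eq:cv-loi*} of convergence in $\Tf^*$ together with \reff{eq:phk0} and \reff{eq:t-t=mart-t}, to showing that $R^1_{n,h}(k)\to 0$ and $R^2_{n,h}(k)\to 0$, and then exploit in each of the three regimes that $b_{n,h}$ decays exponentially in $y_n=a_n/c_n$ while $\alpha_n/(1-x_n)$ stays bounded and $\alpha_n a_n$ grows only like $y_n$; this is exactly the paper's treatment of $R^1_{n,h}(k)$. The one place you diverge is $R^2_{n,h}(k)$: you control $\P_k(Z_{n-h}=a_n)/\P(Z_n=a_n)$ by reusing Lemma \ref{lem:calculG} together with the estimate $G_{n,h}(k,i)=O(y_n^{i-1})$ (which indeed follows from $\binom{a_n-1}{i-1}\le a_n^{i-1}/(i-1)!$ and \reff{eq:equiv-gg-1c0}), whereas the paper instead uses the comparison $p(k)\,\P_k(Z_{n-h}=a_n)\le \P(Z_{n-h+1}=a_n)$ to reduce to the single-ancestor ratio, which equals $b_{n,h-1}$ times a convergent factor and forces the restriction $h\ge 2$ there. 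Both arguments are valid; yours is slightly more self-contained in that it recycles asymptotics already needed for $R^1_{n,h}$ and for the other regimes, and it works for all $h\ge 1$, while the paper's avoids any bookkeeping on the binomial coefficients. Either way the two remainders vanish and the proof closes as you describe.
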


\begin{proof}
  First  notice that  a.s. $H(\tau^\infty)=+\infty $.  Then, using  the
  characterization \reff{eq:cv-loi*} for the convergence in distribution
  in $\Tf^*$,  the result is  a direct consequence of  \reff{eq:phk0} in
  Lemma   \ref{lem:calculR}  and   of   \reff{eq:t-t=mart-t}  in   Lemma
  \ref{lem:t-t=mart-t},                   provided                  that
  $\lim_{n\rightarrow\infty  }  R^i_{n,h}(k)=0$  for $i\in  \{1,  2\}$,
  $h\geq  2$   and  $k\in  \Np$,   where  $R^i_{n,h}$  are   defined  in
  \reff{eq:Rnh1} and \reff{eq:Rnh2}.  \medskip

According to \reff{eq:def-bnh} and the
definitions in Lemma \ref{lem:calculR}, we have
$b_{n,h}=\exp{(-a_n \log(\gamma_{n-h}/\gamma_{n}))}$, 
$\alpha_n=(\gamma_{n-h}-\kappa)
(\gamma_{n-h}-1)$ and $x_n=\gamma_n/\gamma_{n-h}$. 
Since $\kappa>1$
(resp. $\gamma>1$, resp. $\kappa<1$) if $\mu<1$ (resp. $\mu=1$, resp. $\mu>1$),
and since $h\geq 1$, we deduce from \reff{eq:hqg-n},
\reff{eq:equiv-gg-1c0} and \reff{eq:equiv-gg}  that
$\log(\gamma_{n-h}/\gamma_{n})$, $\alpha_n$ and $1-x_n$ are of 
the same order $\mu^{-n}$ (resp. $n^{-2}$, resp. $\mu^n$). In particular
$\lim_{n\rightarrow\infty } \alpha_n/(1-x_n)$ exists and is finite. 
Because of the hypothesis on $(a_n, n\in
\Np)$, we deduce that $\lim_{n\rightarrow\infty } a_n
\log(\gamma_{n-h}/\gamma_n)=+\infty $ and thus $\lim_{n\rightarrow\infty
} b_{n,h}=0$ as well as  $\lim_{n\rightarrow\infty
} b_{n,h}\, (\alpha_n a_n)^{k-1}=0$ as $ a_n
\log(\gamma_{n-h}/\gamma_n)$ and $\alpha_n a_n$ are of the same order. 
 This gives
$\lim_{n\rightarrow\infty } R^1_{n,h}(k)=0$ 
\medskip

Since $p(k) \P_k(Z_{n-h}=a_n)\leq  \sum_{i\in \Nz} p(i)
\P_i(Z_{n-h}=a_n)=\P(Z_{n-h+1}=a_n)$, we deduce that:
\begin{align*}
 \frac{\P_k(Z_{n-h}=a_n)}{\P(Z_n=a_n)} 
&\leq  \inv{p(k)} \frac{\P(Z_{n-h+1}=a_n)}{\P(Z_n=a_n)} \\
&= \inv{p(k)}\, b_{n, h-1}\frac{(\gamma_{n-h+1} - \kappa)(\gamma_{n-h+1}
  -1)}{(\gamma_{n} - \kappa)(\gamma_{n} -1)} \frac{\gamma_n}{\gamma_{n-h+1}},
\end{align*}
where we used that $Z_\ell$ has distribution $\cg[\kappa, \gamma_\ell]$ and 
\reff{eq:hqg-n} for the last equality. According to the previous
paragraph, we have $\lim_{n\rightarrow\infty
} b_{n,h-1}=0$ as $h\geq 2$. Furthermore, using \reff{eq:equiv-gg}, we
get that:
\[
\lim_{n\rightarrow\infty } \frac{(\gamma_{n-h+1} - \kappa)(\gamma_{n-h+1}
  -1)}{(\gamma_{n} - \kappa)(\gamma_{n} -1)}
\frac{\gamma_n}{\gamma_{n-h+1}} = \mu^{-h+1}.
\]
This implies that $\lim_{n\rightarrow\infty }
{\P_k(Z_{n-h}=a_n)}/{\P(Z_n=a_n)} =0$ 
and thus $\lim_{n\rightarrow\infty } R^2_{n,h}(k)=0$. This
finishes the proof. 
\end{proof}

\bibliographystyle{abbrv}
\bibliography{biblio}

\end{document}